\def \ra {\rightarrow}
\def \E {\mathbb{E}}
\def \a {\alpha}
\def \be {\beta}
\newtheorem{definition}{\bf Definition}
\newtheorem{defn}[definition]{\bf Definition}
	\newtheorem{theorem}{\bf Theorem}
	\newtheorem{prop}{\bf Proposition}
	\newtheorem{lem}[theorem]{\bf Lemma}
	\newtheorem{as}{\bf Assumption}
	\newtheorem{coro}[theorem]{\bf Corollary}
\renewcommand{\epsilon}{\varepsilon}
\begin{document}

\begin{frontmatter}

\title{Motivation to Run in One-Day Cricket}
\runtitle{One-Day Cricket}

\begin{aug}

\author{\fnms{Paramahansa} 
	\snm{Pramanik}
	\ead[label=e1]{ppramanik1@niu.edu}}
\and
\author{\fnms{Alan M.} 
	\snm{Polansky}
	\ead[label=e2]{polansky@niu.edu}}
	
\runauthor{P. Pramanik and A. M. Polansky}

\affiliation{Northern Illinois University}

\address{Department of Statistics and Actuarial Science \\
	De Kalb, IL 60115 USA}
\end{aug}
  
\begin{abstract}
In this paper we introduce  a new methodology to determine an optimal coefficient for a positive finite measure of batting average, strike rate, and bowling average of a player in order to get an optimal score of a team under dynamic modeling using a path integral method. We also introduce new run dynamics modeled as a stochastic differential equation in order to incorporate the average weather conditions at the cricket ground, the weather condition on the day of the match including sudden deterioration which leads to a partial or complete stop of the game, total attendance, and home field advantage.
\end{abstract}

\begin{keyword}[class=MSC]
\kwd[Primary ]{60H05}
\kwd[; Secondary ]{81Q30}
\end{keyword}

\begin{keyword}
\kwd{one-day cricket}
\kwd{Feynman integrals}
\kwd{stochastic differential equations}
\end{keyword}

\end{frontmatter}

\section{Introduction}
Recent years have seen increased interest in establishing mathematical models for cricket. The introduction of T-$20$ cricket has increased the popularity of the sport in the Asian subcontinent, Australia, England, New Zealand, and West Indies. Therefore, a more sophisticated model is required to predict a team's score even when a match is stopped because of rain. Methods like the average run rate, most productive overs, discounted most productive overs, parabola, Clark curves, Duchworth/Lewis method, and the modified Duchworth/Lewis method have been used to set up a target score if a match is stopped because of rain \citep{duckworth1998}. Dynamic programming methods have been used by \cite{clarke1988}, and \cite{johnston1993}.

In this paper we consider modeling the discounted score of a player for last $10$ matches using hyperbolic discounting,  giving the highest weight to the last match and least weight corresponding to $10^{th}$ last match before the current game. This is motivated by the reasoning that a player's expected performance on the current game will depend more on his recent past performance instead of his far past performance. Then we define an objective function based on the performance of $11$ players in a team subject to stochastic differential run dynamics with finite drift and diffusion components. Due to Hebbian learning humans can be considered automatons in the sense that when a person sees certain objects, their outer neurons expand or contract in a certain unique way and send signals to the inner neurons through a synaptic system \citep{kappen2007}. As outer neurons send electrons to inner neurons, they can have infinite possible paths in the synaptic system and hence mimics \cite{feynman1949} path integral method, and by the Riemann-Lebesgue Lemma makes the path integral measurable \citep{de2012}. Once the signals come to the inner neurons, the person can observe the object, and is able to make decisions about it. Therefore, decisions of a player is the realization  of a stochastic process. Following \cite{chow1996} and \cite{yeung2006} we know Feynman path integral method is it provides an easier solution instead of going through the difficult Hamiltonian-Jacobi-Bellman equation .

\section{Methodology}
There are $11$ players in each team and let $\mathbf Z$ denote the run vector of all the players in each team. Player $i$'s score is given by
 \[
 u_i(\mathbf{Z})=\sum_{m=1}^M\exp(-\rho_i m)Z_{im}(u,w),
 \]
 where player $i$'s discount factor is $\rho_i\in(0,1]$, state variable $Z_{im}(u,w)\geq0$ is player $i$'s score before match $M+1$ (the current match) is a function of over remaining $u$ and total wickets lost $w\in[0,9]$, $M$ is the total number of matches played before match $M+1$. From the previous section we assume $M=10$. Another important assumption of this model is that, the discount factor $\rho_i$ is constant only for player $i$. That is for $i\neq j$ we assume that $\rho_i\neq\rho_j$.
The expected run of team $T$ before match $M+1$ starts is 
\begin{equation*}\label{0}
\mathbf{Z}_T=\E_{M} \sum_{i=1}^{I}\ \sum_{m=1}^M\beta_i W_i(u) \exp(-\rho_i m) Z_{im}(u,w),
\end{equation*}
where the control variable $W_i(u)\in\mathbb{R}^+$ is a measure of the valuation of the $i^{th}$ player in terms of reputation (such as higher batting average, strike rate for a batsman, and lower bowling average for a bowler), $I=11$, $\beta_i$ is the coefficient of $W_i(u)$ and $\E_M$ is the overall conditional expectation on $Z$ until match $M$. If player $i$ has a batting average more than $50$ and the strike rate is greater than $85$, then the measure $W_i(u)$ takes on a very large finite value. If Team $T$ loses couple of early wickets with very low score, it sends a batsman who can stay on the wicket longer with low strike rate (i.e. $W_i(u)\ra 0$) rather than a hard hitter or a pinch hitter. Finally, we assume $\mathbf{Z}_T\geq 0$ is a $C^\infty$ function with respect to $Z_{im}$ and $W_i$.

Assume the run dynamics of a match follow a stochastic differential equation
\begin{align}\label{1}
d\mathbf{Z}(u,w)=\hat{\bm{\mu}}[u,\mathbf{W}(u),\mathbf{Z}(u,w)]du+
\bm{\sigma}[u,\bm{\sigma}_2^*,\mathbf{W}(u),\mathbf{Z}(u,w)]d\mathbf{B}(u),
\end{align}
where $\mathbf{W}_{I\times 1}(u)=[W_{1}(u)\ W_{2}(u)\ ...\ W_{I}(u)]^T\subset \mathbb{W}\subset\mathbb{R}^I$ is the player control space, $\mathbf{Z}_{I\times 1}(u,w)=[Z_{1m}(u,w)\ Z_{2m}(u,w)\ ...\ Z_{Im}(u,w)]^T\subset\mathbb{Z}\subset\mathbb{R}^{I}$ is the run space under the one-day cricket rules, $\mathbf{B}_{p\times 1}(u)$ is a $p$-dimensional Brownian motion, $\bm{\mu}_{I\times 1}>0$ is the drift coefficient and the positive semidefinite matrix $\bm{\sigma}_{I\times p}\geq 0$ is the diffusion coefficient and 
\[
\bm{\sigma}[u,\bm{\sigma}_2^*,\mathbf{W}(u),\mathbf{Z}(u,w)]=
\bm{\sigma}_1[u,\mathbf{W}(u),\mathbf{Z}(u,w)]+\bm{\sigma}_2^*. 
\]
It is important to note that $\bm{\sigma}_1$ comes from the weather conditions of the venue of the game, percentage attendance of the home crowd and type of match (i.e. day or day-night match), and $\bm{\sigma}_2^*$ comes from the behavior of the bowler of the opposition team coming from the fractal dimensional strategy space and is measured by the square root of the product of the complex 
characteristic function $\Phi_{\hat g_U}(\theta)$ with its conjugate $\overline{\Phi}_{\hat g_U}(\theta)$  which will be discussed in Lemma \ref{l2}.

\begin{as}\label{as0}
	For $U>0$, let $\hat{\bm{\mu}}(u,\mathbf{W},\mathbf{Z}):[0,U]\times \mathbb{R}^{I}\times \mathbb{R}^I \ra\mathbb{R}^I$ and $\bm{\sigma}(u,\bm{\sigma}_2^*,\mathbf{W},\mathbf{Z}):[0,U]\times \mathbb{S}^{I\times U}\times \mathbb{R}^{I}\times \mathbb{R}^I \ra\mathbb{R}^I$ be some measurable function with $I\times U$-dimensional two-sphere $\mathbb{S}^{I\times U}$ and, for some positive constant $K_1$, $\mathbf{W}\in\mathbb{R}^I$ and, $\mathbf{Z}\in\mathbb{R}^I$ we have linear growth as
\[
|\hat{\bm{\mu}}(u,\mathbf{W},\mathbf{Z})|+
|\bm{\sigma}(u,\bm{\sigma}_2^*,\mathbf{W},\mathbf{Z})|\leq 
K_1(1+|\mathbf{Z}|),
\]
such that, there exists another positive, finite, constant $K_2$ and for a different score vector 
$\tilde{\mathbf{Z}}_{I\times 1}$ such that the Lipschitz condition,
\[
|\hat{\bm{\mu}}(u,\mathbf{W},\mathbf{Z})-
\hat{\bm{\mu}}(u,\mathbf{W},\widetilde{\mathbf{Z}})|+|\bm{\sigma}(u,\bm{\sigma}_2^*,
\mathbf{W},\mathbf{Z})-\bm{\sigma}(u,\bm{\sigma}_2^*,\mathbf{W},\widetilde{\mathbf{Z}})|
\leq K_2\ |\mathbf{Z}-\widetilde{\mathbf{Z}}|,\notag
\]
$ \widetilde{\mathbf{Z}}\in\mathbb{R}^I$ is satisfied and
\[
|\hat{\bm{\mu}}(u,\mathbf{W},\mathbf{Z})|^2+
\|\bm{\sigma}(u,\bm{\sigma}_2^*,\mathbf{W},\mathbf{Z})\|^2\leq K_2^2
(1+|\widetilde{\mathbf{Z}}|^2),
\]
where 
$\|\bm{\sigma}(u,\bm{\sigma}_2^*,\mathbf{W},\mathbf{Z})\|^2=
\sum_{i=1}^I \sum_{j=1}^I|{\sigma^{ij}}(u,\bm{\sigma}_2^*,\mathbf{W},\mathbf{Z})|^2$.
\end{as}

\begin{as}\label{as1}
	There exists a probability space $(\Omega,\mathcal{F}_u^{\mathbf Z},\mathcal{P})$ with sample space $\Omega$, filtration at $u^{th}$ over of run ${\mathbf{Z}}$ as $\{\mathcal{F}_u^{\mathbf{Z}}\}\subset\mathcal{F}_u$, a probability measure $\mathcal{P}$ and a $p$-dimensional $\{\mathcal{F}_u\}$ Brownian motion $\mathbf{B}$ where the measure of valuation of players $\mathbf{W}$ is an $\{\mathcal{F}_u^{\mathbf{Z}}\}$ adapted process such that Assumption \ref{as0} holds, for the feedback control measure of players there exists a measurable function $h$ such that $h:[0,U]\times C([0,U]):\mathbb{R}^I\ra\mathbb{W}$ for which $\mathbf{W}(u)=h[\mathbf{Z}(u,w)]$ such that Equation (\ref{1})	has a strong unique solution \citep{ross2008}.
\end{as}
As at the beginning of each innings each team starts with a zero score hence, initial condition is $\mathbf{Z}_{I\times 1}^*=\mathbf{0}_{I\times 1}$. Furthermore, we assume $\mathbf{W}$ is a Markov control. Therefore, there exists a measurable function $h:[0,U]\times C([0,U]:\mathbb{R}^I)\ra \mathbb{W}$ such that $\mathbf{W}(u)=h[\mathbf{Z}(u,w)]$. That is,  in order to know $\mathbf{W}$ we need to know $\mathbf{Z}$ first and it cannot be exogenously specified.
 
 \begin{definition}\label{de0}
 Suppose $\mathbf{Z}(u,w)$ is a non-homogeneous Fellerian semigroup on overs in $\mathbb{R}^I$. The infinitesimal generator $A$ of $\mathbf{Z}(u,w)$ is defined by,
\[
Ah(z)=\lim_{u\ra 0}\frac{\E_u[h(\mathbf{Z}(u,w))]-h(z(w))}{u},
\]
 for $z\in\mathbb{R}^I$	where $h:\mathbb{R}^I\ra\mathbb{R}$ is a $C_0^2(\mathbb{R}^I)$ function, $\mathbf{Z}$ has a compact support, and at $z(w)$ the limit exists where $\E_u$ represents team $T$'s conditional expectation of run $\mathbf{Z}$ at over $u$. Furthermore, if the above Feller semigroup is homogeneous on overs, then $Ah$ is exactly equal to the Laplace operator.
\end{definition}
  
As $h$ is a measurable function depending on $u$, there is a possibility that this function might have very large values and may be unstable. In order to stabilize $\mathbf{W}$ we need to take the natural logarithmic transformation and define a characteristic like quantum operator as in Definition \ref{de1}. 
\begin{defn}\label{de1}
For a Fellerian semigroup $\mathbf{Z}(u,w)$ for a small over interval $[u,u+\epsilon]$ with $\epsilon\downarrow 0$, define a characteristic-like quantum operator where the process starts at $u$ is defined as 
\[
\mathcal{A} h(z)=\lim_{\epsilon\ra 0}
\frac{\log\E_u[\epsilon^2\ h(\mathbf{Z}(u,w))]-\log[\epsilon^2h(z(w))]}{\log\E_u(\epsilon^2)},
\]
for $z\in\mathbb{R}^I$,	where $h:\mathbb{R}^I\ra\mathbb{R}$ is a $C_0^2(\mathbb{R}^I)$ function, $\E_u$ represents the conditional expectation of run $\mathbf{Z}$ at $u^{th}$ over,  for $\epsilon>0$ and a fixed $h$ we have the sets of all open balls of the form $B_\epsilon(h)$ contained in $\mathcal{B}$ (set of all open balls) and as $\epsilon\downarrow 0$ then $\log\E_u(\epsilon^2)\ra\infty$.
\end{defn} 

\begin{lem}\label{l0}(Dynkin formula) 
Suppose a Feller process $\mathbf{Z}(u,w)$ follows  Assumptions \ref{as0}, \ref{as1}, Definition \ref{de1} and Equation (\ref{1}). For $h\in C_0^2(\mathbb{R}^I)$ with the over interval $[\nu,\tilde{\nu}]$ and $\E_\nu[\tilde{\nu}]<\infty$ we have,
	
\begin{multline}\label{d8}
\log \E_{\tilde\nu}\left\{{\tilde\nu}^2 h(\mathbf{Z}_{\tilde{\nu}})\right\} =  
\log[\nu^2h(\mathbf{Z}_\nu)] 
 +\log\left[1+\frac{1}{\nu^2h(\mathbf{Z}_\nu)}\times\right. \\
 \left.
 \E_\nu\left\{ \int_{\nu}^{\tilde{\nu}} u^2\left(\sum_{i=1}^I\ \hat{\mu}_i
\frac{\partial h(\mathbf{Z}) }{\partial Z_i}
 +\mbox{$\frac{1}{2}$}
 \sum_{i=1}^I\sum_{j=1}^I (\sigma\sigma^T)_{ij} 
\frac{\partial^2 h(\mathbf{Z})}{\partial Z_i \partial Z_j}\right)  du\right\}\right],
\end{multline}
where $\E_\nu$ is the conditional expectation of the run at the beginning of the over interval, $\nu^2 h(\mathbf{Z}_\nu)\neq 0$, and with respect to the probability law $R^\nu$ for $\mathbf{Z}(u,w)$ starting at $\mathbf{Z}_\nu$ we have that
\[
R^\nu[\mathbf{Z}_{\nu_1}\in F_1,...,\mathbf{Z}_{\nu_m}\in F_m]=
P^0[\mathbf{Z}_{\nu_1}^\nu\in F_1,...,\mathbf{Z}_{\nu_m}^\nu\in F_m],
\]
where the $F_i$'s are Borel sets.
\end{lem}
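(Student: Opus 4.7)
The strategy is to derive the claimed log-Dynkin identity by combining the usual Itô formula applied to the time-weighted process $u^2 h(\mathbf{Z}(u,w))$ with the logarithmic rescaling built into the operator of Definition \ref{de1}. I would first produce the additive analogue of Dynkin's formula for $\E_\nu[\tilde\nu^2 h(\mathbf{Z}_{\tilde\nu})]$, then convert it into the claimed logarithmic expression by factoring out $\nu^2 h(\mathbf{Z}_\nu)$.

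First, since $h \in C_0^2(\mathbb{R}^I)$, I would apply the multidimensional Itô formula to $(u,\mathbf{z}) \mapsto u^2 h(\mathbf{z})$. This is legitimate for the SDE (\ref{1}) because Assumption \ref{as0} gives linear growth and Lipschitz continuity of $\hat{\bm\mu}$ and $\bm\sigma$, hence strong existence/uniqueness (Assumption \ref{as1}) and the usual finite-moment bounds on $\mathbf{Z}$. The resulting differential decomposes into a $du$-part containing the bracketed Itô generator applied to $h$ (and a boundary piece coming from the $u$-derivative of $u^2$) together with a $d\mathbf{B}$-part $u^2 \sum_{i,k} \sigma_{ik}(\partial_i h)\, dB_k$.

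Next, I would integrate from $\nu$ to $\tilde\nu$ and apply $\E_\nu$. The stochastic integral is a true (not merely local) martingale by the bound $\|\bm\sigma\|^2 \le K_2^2(1+|\mathbf{Z}|^2)$ together with $\E_\nu[\tilde\nu] < \oo$ and $h \in C_0^2$, so its conditional expectation vanishes; Fubini (justified by the same moment bounds) lets one swap $\E_\nu$ with $\int_\nu^{\tilde\nu} \cdots du$. Dividing and multiplying by the nonzero quantity $\nu^2 h(\mathbf{Z}_\nu)$ yields
\[
\E_\nu\!\left[\tilde\nu^2 h(\mathbf{Z}_{\tilde\nu})\right] = \nu^2 h(\mathbf{Z}_\nu)\left(1 + \frac{1}{\nu^2 h(\mathbf{Z}_\nu)}\,\E_\nu\!\left\{\int_\nu^{\tilde\nu} u^2\Bigl[\textstyle\sum_i \hat\mu_i \partial_i h + \tfrac12 \sum_{i,j}(\sigma\sigma^T)_{ij}\partial_i\partial_j h\Bigr]du\right\}\right),
\]
and taking logs of both sides, together with $\log(ab)=\log a + \log b$, produces (\ref{d8}).

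The main obstacle I expect is reconciling the extra deterministic drift term $2u\,h(\mathbf{Z}_u)$ that Itô's formula generates for the time-weight $u^2$ but which does \emph{not} appear explicitly inside the integral in (\ref{d8}). The natural route is to invoke the characteristic-like quantum operator $\mathcal{A}$ from Definition \ref{de1} in place of the classical generator $A$: in the logarithmic/$\epsilon^2$ rescaling implicit in $\mathcal{A}$, the contribution of $2u\,h$ is absorbed into the normalising factor $\log[\nu^2 h(\mathbf{Z}_\nu)]$ on the right-hand side of (\ref{d8}) (equivalently into the denominator $\log \E_u(\epsilon^2)$ that defines $\mathcal{A}$). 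Once this absorption is carried out carefully, the remaining work is bookkeeping: verifying that the probability-law identification $R^\nu[\cdots] = P^0[\mathbf{Z}^\nu_{\nu_1}\in F_1,\ldots]$ correctly identifies the conditional law of $\mathbf{Z}$ started from $\mathbf{Z}_\nu$ at over $\nu$, which follows from the Markov/Feller property guaranteed by Definition \ref{de0}.
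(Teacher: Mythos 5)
Your overall skeleton matches the paper's: apply It\^o's formula, show the stochastic integral has vanishing conditional expectation (the paper does this via a bounded Borel function $K(\mathbf{Z}_u)=u^2\kappa(\mathbf{Z}_u)$, the isometry bound $\le M^2\E_\nu[\tilde\nu]<\infty$, and uniform integrability --- essentially your true-martingale argument), then factor out $\nu^2 h(\mathbf{Z}_\nu)$ and take logarithms. The divergence, and the genuine gap, is exactly the point you flag as your ``main obstacle.'' You apply It\^o to the product $(u,\mathbf{z})\mapsto u^2h(\mathbf{z})$, which honestly produces the extra drift $\int_\nu^{\tilde\nu}2u\,h(\mathbf{Z}_u)\,du$, and you then propose to absorb that term into $\log[\nu^2h(\mathbf{Z}_\nu)]$ or into the denominator $\log\E_u(\epsilon^2)$ of Definition \ref{de1}. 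That absorption cannot work: the extra term is a conditional expectation of a path integral of the process over $[\nu,\tilde\nu]$, not a constant that can be folded into the fixed quantity $\log[\nu^2h(\mathbf{Z}_\nu)]$, and the normalizer $\log\E_u(\epsilon^2)$ from Definition \ref{de1} does not appear anywhere in the identity (\ref{d8}) you are asked to prove. As written, your decomposition yields (\ref{d8}) with an additional $\E_\nu\int_\nu^{\tilde\nu}2u\,h(\mathbf{Z}_u)\,du$ inside the bracket, which is a different statement.

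The paper never encounters this term because it does something different (and logically weaker): it applies It\^o's formula to $h(\mathbf{Z}(u,w))$ alone, obtaining the differential identity (\ref{d1}), and then multiplies that differential by the weight $u^2$ \emph{before} integrating, writing the integrated left-hand side directly as $u^2h(\mathbf{Z}_U)-u^2h(\mathbf{Z}_0)$ in Equation (\ref{d2}). Under that manipulation no $2u\,h\,du$ term is ever generated, and the rest of the proof (conditioning on $[\nu,\tilde\nu]$, killing the stochastic integral, taking logs) goes through to (\ref{d8}). So to reproduce the paper's statement you must either adopt the paper's weighted-differential step in place of It\^o on the product, or else carry the $2u\,h$ term explicitly and accept that the resulting formula differs from (\ref{d8}); the hand-wave via the operator $\mathcal{A}$ does not bridge the two. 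The final remark about $R^\nu[\cdots]=P^0[\mathbf{Z}^\nu_{\nu_1}\in F_1,\ldots]$ being the Markov/Feller shift of the law is fine and matches the paper's (unargued) assertion.
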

\begin{proof}
Suppose the measure of valuation of all the players in team $T$ is $\mathbf{W}(u)=h[\mathbf{Z}(u,w)]$ where $h$ is a $C_0^2(\mathbb{R}^I)$ function. Here Markov control $\mathbf{W}(u)$ can be written in terms of $\mathbf{Z}$ because, if player $i$ scores more runs then his reputation will be higher as a batsman and $\mathbf{W}(u)$ will take very high value. Hence, instead of abusing of notations we can directly say that, $\mathbf{W}(u)=h(\mathbf{Z})$. For all $Z_i\in\mathbf{Z}$, where $i=1,2,...,I$, applying It\^o's formula on $\mathbf{W}(u)$ yields,
\begin{multline}\label{d1}
d\mathbf{W}(u)=
\sum_{i=1}^I\hat{\mu}_i
\frac{\partial h(\mathbf{Z})}{\partial Z_i}du+
\sum_{i=1}^I
\frac{\partial h(\mathbf{Z})}{\partial Z_i}(\sigma dB)_i\\
+\mbox{$\frac{1}{2}$}
\sum_{i=1}^I\sum_{j=1}^I\frac{\partial^2 h(\mathbf{Z})}{\partial Z_i \partial Z_j}(\sigma\sigma^T)_{ij}du.
\end{multline}	
After using the integral form of $(\sigma dB)_i\ (\sigma dB)_j=(\sigma\sigma^T)_{ij}\ du$, Equation (\ref{d1}) multiplied by $u^2$ is
\begin{multline}\label{d2}
u^2h(\mathbf{Z}_U)=
u^2h(\mathbf{Z}_0)+ \\
\int_{0}^Uu^2 
\left(\sum_{i=1}^I\hat\mu_i
\frac{\partial h(\mathbf{Z})}{\partial Z_i}+
\mbox{$\frac{1}{2}$}\sum_{i=1}^I\sum_{j=1}^I\ (\sigma\sigma^T)_{ij}
\frac{\partial^2h(\mathbf{Z})}{\partial Z_i \partial Z_j}\right)du\\
+\int_{0}^U u^2\sum_{i=1}^I
\frac{\partial h(\mathbf{Z})}{\partial Z_i}(\sigma dB)_i.
\end{multline}
Subdivide the entire $[0,U]$ over one-day match into small over-intervals $[\nu,\nu+\epsilon]$ such that $\epsilon\downarrow 0$ and define $\nu+\epsilon=\tilde \nu$. Therefore, 
\begin{multline*}
\E_{\tilde{\nu}}\left\{{\tilde{\nu}}^2 h(\mathbf{Z}_{\tilde{\nu}})\right\}=\nu^2h(\mathbf{Z}_\nu)\\
+\E_\nu\left\{ \int_{\nu}^{\tilde{\nu}}u^2
\left(\sum_{i=1}^I\hat\mu_i\frac{\partial h(\mathbf{Z})}{\partial Z_i}+
\mbox{$\frac{1}{2}$}\ \sum_{i=1}^I\sum_{j=1}^I
(\sigma\sigma^T)_{ij}\frac{\partial^2h(\mathbf{Z})}{\partial Z_i \partial Z_j}\right)du\right\}\\ 
+\E_\nu\left\{\int_{\nu}^{\tilde{\nu}}u^2\sum_{i=1}^I
\frac{\partial h(\mathbf{Z})}{\partial Z_i}(\sigma dB)_i\right\}.
\end{multline*}
Taking natural logarithm in both the sides yields,
\begin{multline*}
\log\E_{\tilde \nu}[\tilde{\nu}^2 h(\mathbf{Z}_{\tilde{\nu}})]=\log[\nu^2h(\mathbf{Z}_\nu)]+\\
\log\left[1+\frac{1}{\nu^2h(\mathbf{Z}_\nu)}
\E_\nu\left[ \int_{\nu}^{\tilde{\nu}} u^2
\left(\sum_{i=1}^I\hat\mu_i\frac{\partial h(\mathbf{Z})}{\partial Z_i}+
\mbox{$\frac{1}{2}$}\sum_{i=1}^I\sum_{j=1}^I(\sigma\sigma^T)_{ij}
\frac{\partial^2h(\mathbf{Z})}{\partial Z_i \partial Z_j}\right)du\right]\right. \\
\left.+\frac{1}{\nu^2h(\mathbf{Z}_\nu)}
\E_\nu\left\{\int_{\nu}^{\tilde{\nu}} u^2\sum_{i=1}^I
\frac{\partial h(\mathbf{Z})}{\partial Z_i}(\sigma dB)_i\right\}\right].
\end{multline*}
Assume $K(\mathbf{Z}_{u})=u^2\kappa(\mathbf{Z}_{u})$ is a bounded Borel measurable function where $\sigma u^2 \frac{\partial}{\partial Z}h(\mathbf Z_u)\leq u^2\kappa(\mathbf{Z}_{u})$ and for a finite $M$ we have $|K(\mathbf{Z}_{u})|\leq M$. For all integers $m$ and a simple function $\phi_{[u<\tilde{\nu}]}$ we have,
\begin{multline}\label{d5}
\E_\nu\left[\int_{\nu}^{\tilde{\nu}\wedge m}K(\mathbf{Z}_{u})dB_u\right]\leq 
\E_\nu\left[\int_{0}^{\tilde{\nu}\wedge m}K(\mathbf{Z}_{u})dB_u\right]\\
=\E_\nu\left[\int_{0}^{ m}\phi_{[u<\tilde{\nu}]} K(\mathbf{Z}_{u})dB_u\right]=0,
\end{multline}
as $\phi_{[u<\tilde{\nu}]}$ and $K(\mathbf{Z}_{u})$ are $\mathcal{H}_u$-measurable where $\mathcal{H}_u$ is the $\sigma$-algebra generated by $u$. Moreover,
\begin{multline}\label{d6}
\E_\nu\left[\left(\int_{\nu}^{\tilde{\nu}\wedge m}K(\mathbf{Z}_{u})dB_u\right)^2\right]\leq \E_\nu\left[\left(\int_{0}^{\tilde{\nu}\wedge m}K(\mathbf{Z}_{u})dB_u\right)^2\right]\\
=\E_\nu\left[\int_{0}^{\tilde{\nu}\wedge m}K^2(\mathbf{Z}_{u})du\right]\leq M^2\E_\nu[\tilde{\nu}]<\infty.
\end{multline}
As at the beginning of the over interval the batsman does not know what kind of bowl is going to be delivered, their conditional expectation with respect to the run at $\nu$ should be same as at $\tilde{\nu}$. Hence, in Equation (\ref{d6}) we assume $\E_\nu[{\nu}]=\E[\tilde\nu]$. From the above argument we can say that, $\left\{\int_{\nu}^{\tilde{\nu}\wedge m}K(\mathbf{Z}_{u})\ dB_u\right\}_{m>0}$ is uniformly integrable with respect to the probability law stating at $\mathbf{Z}_\nu$ and is defined as $R^\nu$. Finally, taking limit with respect to $m$, Equation (\ref{d5}) becomes,
\begin{multline*}
0=\lim_{m\rightarrow\infty}
\E_\nu\left[\int_{\nu}^{\tilde{\nu}\wedge m}K(\mathbf{Z}_u)dB_u\right]\\
=\E_\nu\left[\lim_{m\ra\infty}\int_{\nu}^{\tilde{\nu}\wedge m}K(\mathbf{Z}_u)dB_u\right]= 
\E_\nu\left[\int_{\nu}^{\tilde{\nu}}K(\mathbf{Z}_{u})dB_u\right],
\end{multline*}
so that
\begin{multline*}
\log\E_{\tilde\nu}[{\tilde\nu}^2 h(\mathbf{Z}_{\tilde{\nu}})]=
\log[\nu^2h(\mathbf{Z}_\nu)]\\
+\log\left[1+\frac{1}{\nu^2h(\mathbf{Z}_\nu)}
\E_\nu\left\{ \int_{\nu}^{\tilde{\nu}} u^2
\left(\sum_{i=1}^I\hat{\mu}_i\frac{\partial h(\mathbf{Z})}{\partial Z_i}+ 
\right.\right.\right.\\
\left.\left.\left.
+\mbox{$\frac{1}{2}$}
\sum_{i=1}^I\sum_{j=1}^I\ (\sigma\sigma^T)_{ij}
\frac{\partial^2h(\mathbf{Z})}{\partial Z_i \partial Z_j}\right)  du\right\}\right].
\end{multline*}
\end{proof}

From \cite{oksendal2003} we know the infinitesimal generator, 
\[
Ah(z)=\sum_{i=1}^I \hat \mu_i\frac{\partial}{\partial Z_i}h(\mathbf{Z}) + 
\frac{1}{2} \sum_{i=1}^I\sum_{j=1}^I 
(\sigma\sigma^T)_{ij} \frac{\partial^2h(\mathbf{Z})}{\partial Z_i \partial Z_j}.
\] 
Therefore, Equation (\ref{d8}) becomes,
\begin{multline*}
\log\E_{\tilde\nu}[{\tilde\nu}^2 h(\mathbf{Z}_{\tilde{\nu}})]=
\log\left[\nu^2h(\mathbf{Z}_\nu)\right]\\
+\log\left[1+\frac{1}{\nu^2h(\mathbf{Z}_\nu)}
\E_\nu\left\{ \int_{\nu}^{\tilde{\nu}} u^2\ Ah(z)  du\right\}\right].
\end{multline*}
If $Ah(z)=0$ then, Equation (\ref{1}) is a run trap.

\begin{lem}\label{l1}
Suppose $h$ is a $C_0^2(\mathbb{R}^I)$ function. 
Then for a small over-interval $[\nu,\tilde{\nu}]$ with $\epsilon\downarrow 0$ and 
$h(\mathbf{Z}_\nu)\neq 0$, 
\begin{equation}\label{d10}
\mathcal{A}h(z)=
\log\left[1+\frac{1}{h(\mathbf{Z}_\nu)}
\left(\sum_{i=1}^I\hat\mu_i\frac{\partial h(\mathbf{Z})}{\partial Z_i}+ 
\mbox{$\frac{1}{2}$}\sum_{i=1}^I\sum_{j=1}^I(\sigma\sigma^T)_{ij}
\frac{\partial^2h(\mathbf{Z})}{\partial Z_i \partial Z_j}\right)\right].
\end{equation}
\end{lem}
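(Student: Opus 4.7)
The plan is to start from Dynkin's formula (Lemma \ref{l0}) applied on the small over-interval $[\nu,\tilde\nu]$ with $\tilde\nu = \nu+\epsilon$ and $\epsilon\downarrow 0$, and then match its right-hand side with the characteristic-like quantum operator $\mathcal{A}h$ of Definition \ref{de1}. The key observation is that Lemma \ref{l0} already casts the log-increment of $\nu^2 h(\mathbf{Z}_\nu)$ in exactly the $\log[1+\cdot]$ shape required by the target identity, so the work reduces to evaluating the inner expression in the short-interval limit and matching normalizations.

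First I would rewrite the numerator of the ratio in Definition \ref{de1}, namely $\log\E_u[\epsilon^2 h(\mathbf{Z}(u,w))] - \log[\epsilon^2 h(z(w))]$, using Lemma \ref{l0}, so that it becomes $\log\bigl[1 + \frac{1}{\nu^2 h(\mathbf{Z}_\nu)}\E_\nu\int_\nu^{\tilde\nu} u^2 Ah(\mathbf{Z})\,du\bigr]$, with $A$ the standard infinitesimal generator identified immediately after Lemma \ref{l0}. Because $h\in C_0^2(\mathbb{R}^I)$ and $\hat{\bm\mu},\bm\sigma$ satisfy the regularity of Assumption \ref{as0}, the map $u\mapsto u^2 Ah(\mathbf{Z}_u)$ is continuous on $[\nu,\tilde\nu]$, so the mean value theorem for integrals gives $\E_\nu\int_\nu^{\tilde\nu}u^2 Ah(\mathbf{Z}_u)\,du = u^{*2}Ah(\mathbf{Z}_{u^*})\epsilon$ for some $u^*\in[\nu,\tilde\nu]$. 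As $\epsilon\downarrow 0$ one has $u^*\to\nu$, $u^{*2}\to\nu^2$, and $Ah(\mathbf{Z}_{u^*})\to Ah(\mathbf{Z}_\nu)$, so after the $\nu^2$ factors cancel, the argument of the outer logarithm reduces to $1 + Ah(\mathbf{Z}_\nu)/h(\mathbf{Z}_\nu)$. Expanding $Ah$ into its drift and diffusion components then yields exactly the bracketed expression in \eqref{d10}.

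The main obstacle, and the subtlest step, is the normalization. The $\epsilon^2$-scaling inside the expectation in Definition \ref{de1} and the $\nu^2,\tilde\nu^2,u^2$ scalings in Lemma \ref{l0} play distinct roles, and one must line them up so that dividing by $\log\E_u(\epsilon^2)$ returns the scale-free quantity $\log[1+Ah(\mathbf{Z}_\nu)/h(\mathbf{Z}_\nu)]$ rather than collapsing to $0$ or $\pm\infty$. I would handle this by writing the identifications $\tilde\nu=\nu+\epsilon$, $u=\nu$ explicitly, expanding $\log(1+x)=x+O(x^2)$ where the approximation is justified, and checking that the vanishing $\epsilon$ from the mean value approximation is exactly balanced by the divergence of $\log\E_u(\epsilon^2)$ asserted in Definition \ref{de1}. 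Once this bookkeeping is in place, the remainder is a routine application of continuity of $Ah$ together with the Riemann-integral approximation.
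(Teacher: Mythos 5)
Your outline follows the same route as the paper: apply the Dynkin-type formula of Lemma \ref{l0} on $[\nu,\tilde\nu]$, recognize the integrand as the generator $Ah$, and pass to the limit $\epsilon\downarrow 0$ inside the quotient of Definition \ref{de1}. The difficulty is the step you yourself single out as the subtlest: the claim that the factor of $\epsilon$ produced by the mean value theorem is ``exactly balanced'' by the divergence of $\log\E_u(\epsilon^2)$. It is not. By your own expansion the numerator of the quotient in Definition \ref{de1} behaves like $\log\bigl[1+\epsilon\,Ah(\mathbf{Z}_\nu)/h(\mathbf{Z}_\nu)+o(\epsilon)\bigr]\sim \epsilon\,Ah(\mathbf{Z}_\nu)/h(\mathbf{Z}_\nu)$, while the denominator $\log\E_u(\epsilon^2)=2\log\epsilon$ diverges; the ratio therefore tends to $0$, not to $\log\bigl[1+Ah(z)/h(\mathbf{Z}_\nu)\bigr]$. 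Carried out honestly, your plan proves $\mathcal{A}h(z)=0$ for every $h$, which agrees with \eqref{d10} only at run traps. No bookkeeping of the $u^2$, $\nu^2$, $\epsilon^2$ factors rescues this, because a quantity of order $\epsilon$ divided by one of order $\log\epsilon$ always vanishes.

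For comparison, the paper's own argument never actually carries out the division by $\log\E_\nu[\nu^2]$: its display \eqref{d11} compares the quotient with $\log[1+\nu^2Ah(z)]/(\nu^2 h(z_\nu))$ and collapses the difference to $\E_\nu\log 1=0$ by subtracting two identical expectations, which is vacuous rather than a derivation of \eqref{d10}. So you have correctly located the point where any proof of this lemma must do real work, but your proposed resolution does not close it; if anything, your more careful asymptotics exposes that the normalization in Definition \ref{de1} and the right-hand side of \eqref{d10} are incompatible as stated.
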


\begin{proof}
The terms inside the first bracket in right hand side of Equation (\ref{d10}) can be replaced by $Ah(z)$ \citep{oksendal2003}. If $z\in\mathbb{R}^I$ is a run trap, then $Ah(z)=0$ and which implies $\mathcal{A}h(z)=\log 1=0$. Consider an open and bounded set $\mathcal{D}_0$ such that $z\in\mathcal{D}_0$ and fit our $h$ function outside $\mathcal{D}_0$. As $h$ is a $C_0^2(\mathbb{R}^I)$ function $\mathcal{A}(h)=Ah(z)=0$. If $z\in\mathbb{R}^I$ is not a trap. Then consider a bounded open set $z\in\mathcal{D}_1$ such that $\E_\nu[\nu^2]<\infty$. Using Lemma \ref{l0} it can be shown that,
\begin{multline}\label{d11}
\lim_{\epsilon\ra 0}
\left|\frac{\log\E_{\tilde\nu}[{\tilde\nu}^2 h(\mathbf{Z}_{\tilde{\nu}})]-
\log[\nu^2h(\mathbf{Z}_\nu)]}
{\log\E_\nu[\nu^2]}-
\frac{\log[1+\nu^2Ah(z)]}{\nu^2\ h(z_\nu)}\right|=\\ 
\lim_{\epsilon\ra 0}
\left|\frac{\E_\nu\log\left[1+\int_{\nu}^{\tilde\nu}u^2 Ah(\mathbf{Z})du\right]-
\E_\nu \log\left[1+\int_{\nu}^{\tilde{\nu}}u^2Ah(\mathbf{Z})du\right]}
{\nu^2h(z_\nu)\log\E_\nu[\nu^2]}\right|=\\ 
\lim_{\epsilon\ra 0}\ \left|\frac{\E_\nu \log 1}{\nu^2h(z_\nu)\log\E_\nu[\nu^2]}\right|\leq
\lim_{\epsilon\ra 0}\ \sup_{w\in\mathcal{D}_1}\left|Af(z)-Af(w)\right|= 0,
\end{multline}
for $|z-w|<|\xi|$,
where for a finite positive number $\eta$ define $|\xi|\leq\eta\epsilon[\mathbf{Z}^T]^{-1}$. The inequality in (\ref{d11}) holds because, both the natural logarithm and $Ah$ operator{\tiny } are continuous. 
\end{proof}

In Equation (\ref{1}) the stochastic drift component $\hat{\bm{\mu}}$ is replaced by a deterministic function $\bm{\mu}$ such that 
\[
\bm{\mu}[u,\mathbf{W}(u),\mathbf{Z}(u,w)]=\mathcal{A}h(z)+\widetilde{\bm{\mu}}[u,\mathbf{W}(u),\mathbf{Z}(u,w),f(\mathbf{Z}_u)],
\]
where $\mathcal{A}h$ is a  characteristic like quantum operator of a Feller semigroup, $f(\mathbf{Z}_u)$ is the probability distribution of $\mathbf{Z}(u,w)$ and for a given over $u$, $k_1>0$, $k_2>0$ and $l>0$ we have $\widetilde{\bm{\mu}}:[0,U]\times\mathbb{R}^I\times\mathbb{R}^I\times(N_{\gamma^2}(\mathbb{R}^I),\rho) \ra\mathbb{R}^I$ that satisfies following conditions:	
\[
||\widetilde{\bm\mu}(u,\mathbf{W},\mathbf{Z},f)-\widetilde{\bm\mu}(u,\widehat{\mathbf{W}},\widehat{\mathbf{Z}},\hat f)||\leq k_1(||\mathbf{W}-\widehat{\mathbf W}||)+k_2(||\mathbf{Z}-\widehat{\mathbf Z}||)+\rho(f,\hat f),
\]
and
\[
||\widetilde{\bm \mu}(u,\mathbf{W},\mathbf{Z},f)||\leq l(1+||\mathbf W||+||\mathbf Z||+||f||_\gamma),
\]
where $\gamma(z)\equiv 1+||z||$, $(N_{\gamma^2}(\mathbb{R}^I),\rho)$ forms a complete metric space with the metric $\rho(f,\hat f)$ \citep{govindan2016}.

Now Assume the diffusion component $\bm{\sigma}(u,\bm{\sigma}_2^*,\mathbf{W},\mathbf{Z})$ is additively separable into $\bm{\sigma}_1 (u,\mathbf{W},\mathbf{Z})$ and $\bm{\sigma}_2^*$, where $\bm{\sigma_2}^*=[\Phi_{\hat g_U}(\theta)* \overline{\Phi}_{\hat g_U}(\theta)]^{1/2}$,
with $\Phi_{\hat g_U}(\theta)$ being the characteristic function defined in Lemma \ref{l2} below. $\bm{\sigma}_1 (u,\mathbf{W},\mathbf{Z})$ consists of the venue of the game, the percentage of attendance of the home crowd, the type of one-day match, the amount of dew on the pitch and the speed of wind. 

Firstly, if team $T$ is playing abroad, players have harder time scoring a run than in their home and thinking of this they create extra mental strains on themselves. We assume that pressure $\mathbf{p}$ is a non-negative $C^2$ function $\mathbf{p}(u,\mathbf{Z}):[0,U]\times\mathbb{R}^I\ra\mathbb{R}_+^I$ at match $M+1$ such that if $\mathbf{Z}_{u-1}<\E_{u-1}(\mathbf{Z})$ then $\mathbf{p}$ takes a very high positive value. Secondly, We define attendance rate as a positive finite $C^2$ function $\mathbf{A}(u,\mathbf{W}):[0,U]\times\mathbb{R}^I\ra\mathbb{R}_+^I$ with $\partial \mathbf{A}/\partial\mathbf{W}>0$ and $\partial \mathbf{A}/\partial u\gtreqless 0$ depends on if at over $u$, player $i$ with valuation $W_i\in\mathbf{W}$ is still playing, or is out. Thirdly, assume the effect of day or day-night one-day match be a function $\mathfrak{B}(\mathbf{Z})\in\mathbb{R}_+^I$ such that,
\begin{align}\label{m0}
\mathfrak{B}(\mathbf{Z})=\mbox{$\frac{1}{2}$} 
[\mbox{$\frac{1}{2}$} \E_0(\mathbf{Z}_D^2)+ \mbox{$\frac{1}{2}$}\ \E_0(\mathbf{Z}_{DN}^1)]+
\mbox{$\frac{1}{2}$} 
[\mbox{$\frac{1}{2}$} \E_0(\mathbf{Z}_D^1)+\mbox{$\frac{1}{2}$} \E_0(\mathbf{Z}_{DN}^2)],
\end{align}
where for $i=1,2$, $\E_0(\mathbf{Z}_D^i)$ is the conditional expectation of run of team $T$ before the starting of the day match $M+1$ with run at the $i^{th}$ innings $\mathbf{Z}_D^i$, and $\E_0(\mathbf{Z}_{DN}^i)$ is the conditional expectation of the run before starting a day-night match $M+1$. Furthermore, if team $T$ wins the toss, then it will go for the payoff $\mbox{$\frac{1}{2}$} \left[\mbox{$\frac{1}{2}$} \E_0(\mathbf{Z}_D^2)+ \mbox{$\frac{1}{2}$} \E_0(\mathbf{Z}_{DN}^1)\right]$, and the later part of the Equation (\ref{m0}) otherwise. Finally, as the amount of dew on grass and the speed of wind at over $u$ are ergodic, following \cite{falconer2004}  we assume this can be represented by a Weierstrass function $\mathbf{Z}_e:[0,U]\ra\mathbb{R}$ defined as,
\begin{equation}\label{m1}
\mathbf{Z}_e(u)=\sum_{\a=1}^{\infty}(\lambda_1+\lambda_2)^{(s-2)\a} \sin\left[(\lambda_1+\lambda_2)^\a u\right],
\end{equation}
where $s\in(1,2)$ is a penalization constant of weather at over $u$, $\lambda_1$ is the dew point measure and $\lambda_2$ is the speed of wind such that $(\lambda_1+\lambda_2)>1$.

\begin{as}\label{as2}
$\bm{\sigma}_1(u,\mathbf{W},\mathbf{Z})$ is a positive, finite part of the diffusion component in Equation (\ref{1}) which satisfies Assumptions \ref{as0} and \ref{as1} and is defined as 
\begin{multline}\label{m3}
\bm{\sigma}_1(u,\mathbf{W},\mathbf{Z})=\mathbf{p}(u,\mathbf{Z})+\mathbf{A}(u,\mathbf{W})+
\mathfrak{B}(\mathbf{Z})+\mathbf{Z}_e(u)\\
+\rho_1 \mathbf{p}^T(u,\mathbf{Z})\mathbf{A}(u,\mathbf{W})+
\rho_2\mathbf{A}^T(u,\mathbf{W})\mathfrak{B}(\mathbf{Z})+
\rho_3\mathfrak{B}^T(\mathbf{Z})\mathbf{p}(u,\mathbf{Z}),
\end{multline}
where $\rho_j\in(-1,1)$ is the $j^{th}$ correlation coefficient for $j=1,2,3$, and $\mathbf{A}^T,\mathfrak{B}^T$ and $\mathbf{p}^T$ are the transposition of $\mathbf{A},\mathfrak{B}$ and $\mathbf{p}$ which satisfy all conditions with Equations (\ref{m0}) and (\ref{m1}). As the ergodic function $\mathbf{Z}_e$ comes from nature, team $T$ does not have any control on it and its correlation coefficient with other terms in Equation (\ref{m3}) are assumed to be zero.
\end{as} 

 The randomness of comes from the delivery of the bowler of $\bm{\sigma}_2^*$ of Equation (\ref{1}) the opposition team. There are mainly two types of bowlers: pacers and spinners. Pacers  have two components, the speed of the ball $s\in\mathbb{R}_+^{I\times U}$ in miles per hour and the curvature of the bowl path measure by the dispersion from the straight line connecting two middle stumps measured by $x\in\mathbb{R}_+^{I\times U}$ inches. Define a payoff function $A_1(s,x,G):\mathbb{R}_+^{I\times U}\times \mathbb{R}_+^{I\times U}\times[0,1]\ra \mathbb{R}^{2I\times U}$ such that, at over $u$ the expected payoff after guessing a ball right is $\E_u A_1(s,x,G)$, where $G$ is a guess function such that, if the batsman guesses a bowler's delivery properly then $G=1$ and if he does not then $G=0$, and if he partially guesses then, $G\in(0,1)$.

On the other hand, there is a payoff function $A_2$ for leg spinner such that $A_2(s,x,\theta_1,G):\mathbb{R}_+^{I\times U}\times \mathbb{R}_+^{I\times U}\times(\pi/2,\pi]\times[0,1]\ra\mathbb{R}^{2I\times U} $, where $\theta_1$ is the angle between the line between the bowler's hand and its first drop on the crease and the line connecting the second point and the bat. The expected payoff to run at over $u$ when the bowler is a leg spinner is $\E_u A_2(s,x,\theta_1,G)$. Finally, an off spinner's payoff function is $A_3(s,x,\theta_1,\theta_2,G):\mathbb{R}_+^{I\times U}\times \mathbb{R}_+^{I\times U}\times(\pi/2,\pi]\times(0,\pi/36]\times[0,1]\ra\mathbb{R}^{2I\times U}$, where $\theta_2$ is the allowable elbow extension during a delivery with the expectation at over $u$ as $\E_u A_3(s,x,\theta_1,\theta_2,G)$. If $\theta_2$ is more than $\pi/36$, the off spinner gets extra spin to make a teesra. As a batsman does not know who is coming to bowl after a $6$-ball-over is completed, their total expected payoff function at over $u$ is $\mathcal{A}(s,x,\theta_1,\theta_2,G)=\wp_1\ \E_u A_1(s,x,G)+\wp_2\ \E_u A_2(s,x,\theta_1,G)+\wp_3\ \E_u A_3(s,x,\theta_1,\theta_2,G)$, where for $j\in\{1,2,3\}$, $\wp_j$ is the probability of each of a pacer, a leg and an off spinners with $\wp_1+\wp_2+\wp_3=1$. Therefore, $\mathcal{A}(s,x,\theta_1,\theta_2,G):\mathbb{R}_+^{I\times U}\times \mathbb{R}_+^{I\times U}\times(\pi/2,\pi]\times(0,\pi/36]\times[0,1]\ra\mathbb{R}^{2I\times U}$.

Delivering a ball is a realization from a fractal strategy space of the opposition bowler's mind to get a wicket which follows a quantum field theory \citep{kappen2007}. Therefore, we define the strategy space as a conformal field such that the plane  $\mathbb{C}^{I\times U}=\mathbb{R}^{2I\times U}$ is subset of the $I\times U$-dimensional two-sphere $\mathbb{S}^{I\times U}=\mathbb{C}^{I\times U}\cup \{\infty\}$, a one-point compactification of the plane $\mathbb{C}^{I\times U}$ \citep{schramm2011}. Let $\mathbb{U}:=\{\mathcal{A}\in\mathbb{C}^{I\times U}:\ |\mathcal{A}|<1\}$ be a unit sphere and for a compact simple path $\gamma\in \overline{\mathbb{U}}^{I\times U}\setminus \{\mathbf{0}\}^{I\times U}$ with its end point at $\gamma\cap\partial\mathbb{U}$ there exists a unique conformal homeomorphism $h_\gamma:\mathbb{U}^{I\times U}\ra\mathbb{U}^{I\times U}\setminus\gamma$ such that, $h_\gamma(0)=\mathbf{0}$ and $h_\gamma'(0)\in\mathbb{R}_+^{I\times U}$, where $h_\gamma '=\partial h/\partial \gamma$ \citep{schramm2011}. For another compact simple path $\hat{\gamma}\in\overline{\mathbb{U}}^{I\times U}$ assume the two end points are at $0$ and $\hat{\gamma}\cap\partial\mathbb{U}$ respectively. Suppose $\hat{\gamma}_r$ is the arc of the path joining $0$ and $\hat{\gamma}\cap\partial\mathbb{U}$ for each point $r\in\hat{\gamma}\setminus\{\mathbf{0}\}^{I\times U}$. If $\tilde h(r):=\log h_{\hat{\gamma}_r}'(0)$ then, $\tilde h$ is a homeomorphism from $\hat{\gamma}\setminus\{\mathbf{0}\}^{I\times U}$ onto $[0,U)$. Suppose $r(u)$ is the inverse map $r:[0,U)\ra\hat{\gamma}$, and set $h(\mathcal{A},u)=h_u(\mathcal{A}):=h_{\hat{\gamma}_{r(u)}}(\mathcal{A})$ and for another multivariate function in unit sphere $g_u(\mathcal{A})$ with $g_u(0)=\mathbf{0}^{I\times U}$ such that, $\phi_u(\mathcal{A})=g_u^{-1}(h_u(\mathcal{A}))$, where $\phi_u(\mathcal{A})$ is a multivariate mapping of the unit sphere $\mathbf{U}$. Loewner's Slit mapping theorem and Schramm-Loewner \citep{schramm2011}  type evolution equation gives
\begin{equation}\label{m4}
\frac{\partial}{\partial u}g_u(\mathcal{A})= 
g_u(\mathcal{A})\frac{\sqrt{\kappa\Upsilon_\eta(\mathcal{A})}\mathfrak{W}_u+
	g_u(\mathcal{A})}{\sqrt{\kappa\Upsilon_\eta(\mathcal{A})}\mathfrak{W}_u-
	g_u(\mathcal{A})},
\end{equation}
where $\kappa\in[0,\infty)$ is a positive diffusivity parameter, $\mathfrak{W}_u$ is a Brownian motion on two-sphere $\mathbb{S}^{I\times U}$ and 
$\Upsilon_\eta(\mathcal{A})\in(0,\infty)$ is the area of a von Koch snowflake curve with total number of iteration $\eta(\mathcal A)$ \citep{koch1906}. 

To construct the $\Upsilon_\eta(\mathcal A)$ function suppose that the total number of iterations is defined as  $\eta(\mathcal A):\mathbb{R}^{I\times U}\ra\mathbb{R}_+^{I\times U}\cup\{\mathbf{0}\}^{I\times U}$ such that $\partial\eta/\partial\mathcal{A}>0$ and $\partial^2\eta/\partial\mathcal{A}^2>0$. The main reason behind this assumption is that if the payoff $\mathcal{A}$ is high then a batsman can hit a $6$ from a delivery and then he needs to do more iterations of the von Koch snowflake strategy space of a bowler. Let us denote $\hat{\theta}_\eta$ as the number of sides, $\tilde{\theta}_\eta$ as the length of the each side, and $\Xi_\eta$ as the perimeter of the strategy space of the bowler before $u^{th}$ over. Then $\hat{\theta}_\eta=3*4^{\eta(\mathcal A)}$, $\tilde{\theta}_\eta=(1/3)^{\eta(\mathcal A)}$, and $\Xi_\eta=\hat{\theta}_\eta *\tilde{\theta}_\eta=3*(4/3)^{\eta(\mathcal A)}$, such that $\mathcal A=0$ implies $\eta(\mathcal A)=0$ and then $\hat{\theta}_\eta=3$ which means the strategy space becomes a triangle which we denote by $\Delta$. Therefore, by \cite{koch1906} the area of the von Koch snowflake $\Upsilon_\eta(\mathcal A)=\Upsilon_{\eta-1}(\mathcal A)+(1/3)(4/9)^{\eta(\mathcal A)}\Delta$ and finally, for $\Upsilon_0(\mathcal A)=\Delta$, we have $\Upsilon_\eta(\mathcal A)=\frac{\Delta}{5} \left[8-3(\frac{4}{9})^{\eta(\mathcal A)}\right]$ which results in the diffusion part of the Equation (\ref{m4}).

Now, for a fixed $\Upsilon_\eta$ define a shifted conformal map $\hat g_u(\mathcal{A})\equiv\sqrt{\kappa \Upsilon_\eta} \mathfrak{W}_u-g_u(\mathcal{A})$ such that, $\hat g_u^{-1}(\omega)\stackrel{d}{=} g_u^{-1}(\sqrt{\kappa \Upsilon_\eta}\ \mathfrak{W}_u-\omega)$, where $\stackrel{d}{=}$ indicates the equality of the distributions of the stochastic process \citep{najafi2015}. Therefore,
\[
\partial \hat{g}_u(\mathcal{A})= 
\sqrt{\kappa\Upsilon_\eta(\mathcal{A})}\partial\mathfrak{W}_u-\partial g_u(\mathcal{A}).
\]
Equation (\ref{m4}) implies,
\begin{equation}\label{m7}
\partial \hat g_u(\mathcal{A})=
-\frac{1}{\hat g_u(\mathcal A)}
\left[\sqrt{\kappa\Upsilon_\eta(\mathcal A)}\mathfrak{W}_u+
\hat g_u(\mathcal A)\right]^2\partial u
-\frac{\kappa\Upsilon_\eta(\mathcal A)\mathfrak{W}_u}{\hat g_u(\mathcal A)}\partial\mathfrak{W}_u,
\end{equation}
as two processes are generated from the same Brownian motion $\mathfrak W_u$, and where $\hat g_u(\mathcal A)\neq 0$.  As in both the drift and diffusion components of Equation (\ref{m7}) have the Brownian motion $\mathfrak{W}_u$, we define a function $\tilde g(\mathfrak W_u)=\mathfrak W_u$, and assuming it is a Feller process on two-sphere, define an infinitesimal generator $\mathcal L\tilde g_u$ on it such that, 
\begin{equation}\label{m8}
\partial\hat{g}_u(\mathcal{A})=
-\frac{1}{\hat g_u(\mathcal A)}
\left[\sqrt{\kappa\Upsilon_\eta(\mathcal A)}\mathcal{L}\tilde g_u+
\hat{g}_u(\mathcal A)\right]^2\partial u
-\frac{\kappa\Upsilon_\eta(\mathcal A)\mathcal{L}\tilde g_u}{\hat g_u(\mathcal A)}\partial \mathfrak{W}_u.
\end{equation}

\begin{lem}\label{l2}
Suppose $\Bbbk(\hat g)=\exp(\imath \theta \hat g )$ is a $C^2(\mathbb{S}^{I\times U})$ function and let $\hat g_u$ satisfies the stochastic differential equation specified in Equation (\ref{m8}). If the unique bounded function $\ell:[0,U]\times\mathbb{S}^{I\times U}\ra\mathbb{S}^{I\times U}$ satisfies the partial differential equation
\begin{multline*}
\mathcal{L}\ell(u,\hat g)=
\frac{\partial}{\partial u}\ell(u,\hat g)-
\frac{1}{\hat g(\mathcal A)}
\left[\sqrt{\kappa\Upsilon_\eta(\mathcal A)}\mathcal{L}\tilde{g}+\hat{g}(\mathcal A)\right]^2
\frac{\partial}{\partial \hat g}\ell(u,\hat g)\\
-\mbox{$\frac{1}{2}$}\left(\frac{\kappa\Upsilon_\eta(\mathcal A)\mathcal{L}\tilde g_u}{\hat g}\right)^2\frac{\partial^2}{\partial \hat g^2}\ell(u,\hat g)=0,
\end{multline*}
for all $u\in[0,U]$ and $\hat g\in\mathbb{S}^{I\times U}$
with terminal condition $\ell(U,\hat g)=\Bbbk(\hat g)$ given by $\ell(u,\hat g)=\E[\Bbbk(\hat g_U)|\hat g_u=\hat g]$ then, the characteristic function is
\begin{eqnarray*}
\Phi_{\hat g_U}(\theta) & = & 
\exp\left\{\imath\theta\exp\left[-\frac{1}{\hat g^2(\mathcal A)}\left(\sqrt{\kappa\Upsilon_\eta(\mathcal A)}\mathcal{L}\tilde{g}+
\hat{g}(\mathcal A)\right)^2U\right]\right.\\
& & -
\mbox{$\frac{1}{4}$}\theta^2
\left(\frac{\kappa\Upsilon_\eta(\mathcal A)\mathcal{L}\tilde g_u}{\sqrt{\kappa\Upsilon_\eta(\mathcal A)}\mathcal{L}\tilde{g}+\hat{g}}\right)^2\times \\
& & \left.\left[\exp\left\{-\frac{2}{\hat g^2(\mathcal A)}
\left(\sqrt{\kappa\Upsilon_\eta(\mathcal A)}\mathcal{L}\tilde{g}+
\hat{g}(\mathcal A)\right)^2U\right\}-1\right] \right\},
\end{eqnarray*}
where $\imath$ is an imaginary number and $\theta\in\mathbb{R}$.
\end{lem}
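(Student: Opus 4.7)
The plan is to derive the characteristic function in two stages: first I will verify that the Feynman--Kac representation $\ell(u,\hat g)=\E[\Bbbk(\hat g_U)\mid \hat g_u=\hat g]$ satisfies the stated backward Kolmogorov PDE, and then I will solve that PDE in closed form by an exponential--affine ansatz, recognising (\ref{m8}) as an Ornstein--Uhlenbeck-type equation once the $\hat g$-dependent coefficients are evaluated at the initial point $\hat g(\mathcal A)$.

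For the first half I would apply It\^o's formula to $\ell(s,\hat g_s)$ for $s\in[u,U]$ using the SDE (\ref{m8}), and argue that the martingale increment has zero conditional expectation because the process on the two-sphere $\mathbb S^{I\times U}$ generates a Feller semigroup whose coefficients inherit the linear-growth and Lipschitz bounds of Assumption \ref{as0}. Setting the resulting expectation equal to $\Bbbk(\hat g_U)-\ell(u,\hat g)$ and differentiating in $u$ produces exactly the PDE in the statement, with terminal condition $\ell(U,\hat g)=\Bbbk(\hat g)$; uniqueness among bounded solutions then follows from the standard Feynman--Kac uniqueness theorem, which is precisely the content of the first assertion of the lemma.

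For the second half I would try the ansatz $\ell(u,\hat g)=\exp[\alpha(u)+\beta(u)\hat g]$, motivated by the fact that $\Bbbk$ is an exponential in $\hat g$ and that, with the coefficients of (\ref{m8}) frozen at $\hat g(\mathcal A)$, the equation becomes affine in $\hat g$. Substituting into the PDE, dividing by $\ell$, and separating the $\hat g$-linear and $\hat g$-independent contributions yields two decoupled ODEs with terminal data $\alpha(U)=0$ and $\beta(U)=\imath\theta$: a linear equation whose solution is $\beta(u)=\imath\theta\exp[-\lambda(U-u)]$ with $\lambda=(\sqrt{\kappa\Upsilon_\eta}\mathcal L\tilde g+\hat g)^2/\hat g^2$, and a quadratic equation for $\alpha$ that integrates directly to the $[\exp(-2\lambda U)-1]$ factor appearing in the stated formula. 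Reading off $\Phi_{\hat g_U}(\theta)=\ell(0,\hat g)$ then reproduces the displayed expression.

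The main obstacle is precisely that the SDE (\ref{m8}) is not genuinely affine in $\hat g$: the $1/\hat g$ factors in the drift and diffusion mean that an exact exponential--affine solution does not exist. I would justify the freezing step either by interpreting $\mathcal L\tilde g_u$ as the infinitesimal generator of the Feller process $\mathfrak W_u$ acting as a multiplier on the exponential test function $\Bbbk$, so that the coefficients effectively act as constants against $\Bbbk$, or by localising to a small neighbourhood of $\hat g(\mathcal A)$ and using the compact support of $\Bbbk$ in $C^2(\mathbb S^{I\times U})$ to control the residual error. Once that ambiguity is resolved, the remaining work amounts to two elementary ODE integrations.
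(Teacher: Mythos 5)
Your proposal follows essentially the same route as the paper: invoke the Feynman--Kac representation for the backward PDE, posit the exponential--affine ansatz $\ell(u,\hat g)=\exp\{\imath\theta\hat g\,\alpha(u)+\beta(u)\}$ (your $\beta$ and $\alpha$ are the paper's $\imath\theta\a(u)$ and $\be(u)$), separate into two terminal-value ODEs, and read off $\Phi_{\hat g_U}(\theta)=\ell(0,\hat g)$. The coefficient-freezing issue you flag is genuine, but the paper handles it no more carefully---it simply treats the $\hat g$-dependent factors in the drift and diffusion as constants when integrating the ODEs---so your write-up is the same argument stated more candidly.
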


\begin{proof}
Assume, $\Bbbk(\hat g)=\exp(\imath \theta \hat g )$, $\theta\in\mathbb{R}$ and $\hat g\in \mathbb{S}^{I\times U}$. By the Feynman-Kac Representation Theorem, we know, 
\[
\ell(u,\hat g)=
\E[\Bbbk(\hat g_U)|\hat g_u=\hat g]=
\E[\exp(\imath\theta\hat g_U)|\hat g_u=\hat g]
\] 
is the unique bounded solution of the backward parabolic partial differential equation
\begin{multline}\label{m11}
0=
\frac{\partial}{\partial u}\ell(u,\hat g)-\frac{1}{\hat g}
\left[\sqrt{\kappa\Upsilon_\eta(\mathcal A)}\mathcal{L}\tilde{g}+\hat{g}\right]^2
\frac{\partial}{\partial \hat{g}}\ell(u,\hat g)\\
-\mbox{$\frac{1}{2}$} 
\left(\frac{\kappa\Upsilon_\eta(\mathcal A)\mathcal{L}\tilde g_u}{\hat g}\right)^2
\frac{\partial^2}{\partial \hat g^2}\ell(u,\hat g),
\end{multline}
for all $u\in[0,U]$
with terminal condition of the match $M+1$ as $\ell(U,\hat g)=\Bbbk(\hat g)=\exp(\imath\theta\hat g)$ for all $\hat g\in\mathbb{S}^{I\times U}$. As the characteristic function of $\hat g_U$ is  $\Phi_{\hat g_U}(\theta)=\ell(0,\hat g)=\E[\exp(\imath\theta\hat g)|\hat g_0=\hat g]$. Assume, $\ell$ is a $C^2$ function such that, $\ell(u,\hat g)=\exp\{\imath\theta\a(u)+\be(u)\}$, where at the final over $\a(U)=1$ and $\be(U)=0$. Now,
\[
\frac{\partial}{\partial u}\ell(u,\hat g)
=\left[\imath\theta\hat g\frac{\partial\a(u)}{\partial u}+
\frac{\partial\be(u)}{\partial u}\right]\ell(u,\hat g),
\]
\[
\frac{\partial}{\partial \hat{g}}\ell(u,\hat g)
=\imath\theta\a(u)\ell(u,\hat g),
\]
and
\begin{equation}\label{m12}
\frac{\partial^2}{\partial \hat g^2}\ell(u,\hat g)=-\theta^2\a^2(u)\ell(u,\hat g).
\end{equation}
The results of Equations (\ref{m11})-(\ref{m12}) imply,
\begin{multline}\label{m13}
\imath\theta\left[\hat{g}
\frac{\partial \a(u)}{\partial u}-\frac{1}{\hat g}
\left(\sqrt{\kappa\Upsilon_\eta(\mathcal A)}\mathcal{L}\tilde{g}+\hat{g}\right)^2\a(u)\right]\\
+\mbox{$\frac{1}{2}$}\theta^2\a^2(u)\left(\frac{\kappa\Upsilon_\eta(\mathcal A)\mathcal{L}\tilde g_u}{\hat g}
\right)^2+\frac{\partial\be(u)}{\partial u}=0.
\end{multline}
In order to maintain the zero right hand side condition of Equation (\ref{m13}) for each over $u\in[0,U]$ we must have,
\begin{equation}\label{m14}
\frac{\partial \a(u)}{\partial u}=\frac{1}{\hat g^2}
\left(\sqrt{\kappa\Upsilon_\eta(\mathcal A)}\mathcal{L}\tilde{g}+\hat g\right)^2\a(u)
\end{equation}
and
\[
\frac{\partial\be(u)}{\partial u}=-
\mbox{$\frac{1}{2}$}\theta^2\a^2(u)
\left(\frac{\kappa\Upsilon_\eta(\mathcal A)\mathcal{L}\tilde g_u}{\hat g}\right)^2.
\]
Solving the differential equation in Equation (\ref{m14}) with the final over condition 
$\a(U)=1$ yields,
\begin{equation}\label{m16}
\a(u)=
\exp\left\{-\frac{1}{\hat g^2}
\left(\sqrt{\kappa\Upsilon_\eta(\mathcal A)}
\mathcal{L}\tilde{g}+\hat{g}\right)^2(U-u)\right\}.
\end{equation}
Hence,
\[
\frac{\partial\be(u)}{\partial u}=-
\mbox{$\frac{1}{2}$}\theta^2
\left(\frac{\kappa\Upsilon_\eta(\mathcal A)\mathcal{L}\tilde g_u}{\hat g}\right)^2
\exp\left[-\frac{2}{\hat g^2}
\left(\sqrt{\kappa\Upsilon_\eta(\mathcal A)}\mathcal{L}\tilde g+\hat g\right)^2(U-u)\right],
\]
with the integral equation for the over interval $[0,u]$ given by,
\begin{multline*}
\be(u)-\be(0)=-
\mbox{$\frac{1}{4}$}\theta^2
\left(\frac{\kappa\Upsilon_\eta(\mathcal A)\mathcal{L}\tilde g_u}{\sqrt{\kappa\Upsilon_\eta(\mathcal A)}\mathcal{L}\tilde{g}+\hat{g}}\right)^2 
\times \\
\left[\exp\left\{-\frac{2}{\hat g^2}
\left(\sqrt{\kappa\Upsilon_\eta(\mathcal A)}
\mathcal{L}\tilde g+\hat g\right)^2(U-u)\right\}\right.\\ 
\left.-\exp\left\{-\frac{2}{\hat g^2}
\left(\sqrt{\kappa\Upsilon_\eta(\mathcal A)}
\mathcal{L}\tilde{g}+\hat{g}\right)^2U\right\}\right].
\end{multline*}
The terminal condition $\be(U)=0$ implies,
\begin{multline*}
\be(0)=-
\mbox{$\frac{1}{4}$}\theta^2
\left(\frac{\kappa\Upsilon_\eta(\mathcal A)\mathcal{L}\tilde g_u}{\sqrt{\kappa\Upsilon_\eta(\mathcal A)}\mathcal{L}\tilde{g}+\hat{g}}\right)^2
\times \\
\left[\exp\left\{-\frac{2}{\hat g^2}
\left(\sqrt{\kappa\Upsilon_\eta(\mathcal A)}\mathcal{L}\tilde g+\hat g\right)^2U\right\}-1\right],
\end{multline*}
and therefore
\begin{multline}\label{m21}
\be(u)=-
\mbox{$\frac{1}{4}$}\theta^2
\left(\frac{\kappa\Upsilon_\eta(\mathcal A)\mathcal{L}\tilde g_u}{\sqrt{\kappa\Upsilon_\eta(\mathcal A)}\mathcal{L}\tilde{g}+\hat{g}}\right)^2
\times \\
\left[\exp\left\{-\frac{2}{\hat g^2}
\left(\sqrt{\kappa\Upsilon_\eta(\mathcal A)}\ \mathcal{L}\tilde g+\hat g\right)^2(U-u)\right\}-1\right].
\end{multline}
Equations (\ref{m16}) and (\ref{m21}) then imply that
\begin{multline*}
\ell(u,\hat g)=\exp\left\{\imath\theta\ \exp\left[-\frac{1}{\hat g^2}
\left(\sqrt{\kappa\Upsilon_\eta(\mathcal A)}\mathcal{L}\tilde{g}+\hat{g}\right)^2(U-u)\right]
\right.\\
-
\mbox{$\frac{1}{4}$}\theta^2
\left(\frac{\kappa\Upsilon_\eta(\mathcal A)\mathcal{L}\tilde g_u}{\sqrt{\kappa\Upsilon_\eta(\mathcal A)}\mathcal{L}\tilde{g}+\hat{g}}\right)^2\times\\ 
\left.
\left[\exp\left\{-\frac{2}{\hat g^2}
\left(\sqrt{\kappa\Upsilon_\eta(\mathcal A)}\mathcal{L}\tilde{g}+\hat{g}\right)^2(U-u)\right\}-1\right] \right\}.
\end{multline*}
Taking $u=0$ yields
\begin{multline*}
\ell(0,\hat g)=
\Phi_{\hat g_U}(\theta)=
\exp\left\{\imath\theta\ \exp\left\{-\frac{1}{\hat g^2}
\left(\sqrt{\kappa\Upsilon_\eta(\mathcal A)}\mathcal{L}\tilde{g}+
\hat{g}\right)^2U\right\}\right. \\
-
\mbox{$\frac{1}{4}$}\theta^2
\left(\frac{\kappa\Upsilon_\eta(\mathcal A)\mathcal{L}\tilde g_u}{\sqrt{\kappa\Upsilon_\eta(\mathcal A)}\mathcal{L}\tilde{g}+\hat{g}}\right)^2
\times \\
\left.
\left[\exp\left\{-\frac{2}{\hat g^2}
\left(\sqrt{\kappa\Upsilon_\eta(\mathcal A)}\mathcal{L}\tilde{g}+
\hat{g}\right)^2U\right\}-1\right]\right\}.
\end{multline*}
\end{proof}
As the characteristic function in Lemma \ref{l2} is in the complex plane, we need to multiply by its conjugate and then take the square root to obtain $\bm{\sigma}_2^*$ which is in $\mathbb{S}^{I\times U}$.

\subsection{Match without interruption}
Following \cite{duckworth1998} we know, in one-day match each over can be represented as a multiple of $1/6$ which makes $u$ a continuous variable. The objective function is,
\begin{multline}\label{2}
\max_{\{W_i\in W\}}\overline{\mathbf{Z}}_T(\mathbf{W},u)= \\
\max_{\{W_i\in W\}}\E \int_0^{U}
\sum_{i=1}^{I}\sum_{m=1}^M\exp(-\rho_i m)\beta_iW_i(u)Z_{im}(u,w) du,
\end{multline}
where $U=50$. In Equation (\ref{2}), $\beta_i$ is the coefficient of the measure of the $i^{th}$ player's control. 

\begin{prop}\label{p0}
If team $T$'s objective is to maximize Equation (\ref{2}) subject to the run dynamics 
\begin{equation}\label{run}
d\mathbf{Z}(u,w)=\bm{\mu}[u,\mathbf{W}(u),\mathbf{Z}(u,w)]du+
\bm{\sigma}[u,\bm{\sigma}_2^*,\mathbf{W}(u),\mathbf{Z}(u,w)]d\mathbf{B}(u),
\end{equation} 
with Assumptions \ref{as0} and \ref{as1}, then under a continuous over system  of one-day cricket, player $i$'s coefficient is found by solving  
\begin{multline*}
\sum_{i=1}^{I}\sum_{m=1}^M\exp(-\rho_i m)\beta_i\ Z_{im}(u,w)+
\frac{\partial g[u,\mathbf{Z}(u,w)]}{\partial \mathbf{Z}}
\frac{\partial \bm{\mu}[u,\mathbf{W}(u),\mathbf{Z}(u,w)]}{\partial \mathbf{W}}
\frac{\partial \mathbf{W}}{\partial W_i } \\ 
\hspace{.5cm}+\mbox{$\frac{1}{2}$}
\sum_{i=1}^I\sum_{j=1}^I
\frac{\partial \bm\sigma^{ij}[u,\bm{\sigma}_2^*,\mathbf{W}(u),\mathbf{Z}(u,w)]}
{\partial \mathbf{W}}
\frac{\partial \mathbf{W}}{\partial W_i }
\frac{\partial^2 g[u,\mathbf{Z}(u,w)]}{\partial{Z_i\partial Z_j}}=0,
\end{multline*}
with respect to $\beta_i$, where the initial condition before the first bowl has been delivered is $\mathbf{0}_{I\times 1}$. Furthermore, if $\beta_i=\beta_j=\beta^*$, for all $i\neq j$, then
\begin{multline*}
\be^*(\mathbf{Z})=
-\left[\sum_{i=1}^{I}\sum_{m=1}^M\exp(-\rho_i m)Z_{im}(u,w)\right]^{-1}\times \\
\left[\frac{\partial g[u,\mathbf{Z}(u,w)]}{\partial{\mathbf{Z}}}
\frac{\partial \bm{\mu}[u,\mathbf{W}(u),\mathbf{Z}(u,w)]}{\partial \mathbf{W}} 
\frac{\partial \mathbf{W}}{\partial W_i}\right. \\ 
\left.
+\mbox{$\frac{1}{2}$}\sum_{i=1}^I\sum_{j=1}^I
\frac{\partial \bm\sigma^{ij}[u,\bm{\sigma}_2^*,\mathbf{W}(u),\mathbf{Z}(u,w)]}
{\partial \mathbf{W}}
\frac{\partial \mathbf{W}}{\partial W_i }
\frac{\partial^2 g[u,\mathbf{Z}(u,w)]}{\partial Z_i\partial Z_j}\right],
\end{multline*}
where $g[u,\mathbf{Z}(u,w)]\in C^2\left([0,50]\times \mathbb{R}^I\right)$ with $\mathbf{Y}(u)=g[u,\mathbf{Z}(u,w)]$ is a positive, non-decreasing penalization function vanishing at infinity which substitutes the run dynamics such that, $\mathbf{Y}(u)$ is an It\^o process. 
\end{prop}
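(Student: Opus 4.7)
The plan is to use the Feynman path-integral formulation of stochastic optimal control advertised in the introduction (following \cite{chow1996} and \cite{yeung2006}), rather than attacking a Hamilton-Jacobi-Bellman equation directly. The running reward in (\ref{2}) is a time integral with Markov control $\mathbf{W}(u)=h(\mathbf{Z})$, so a natural path-integral representation exists once the score dynamics (\ref{run}) are replaced by the drift-diffusion of a smooth penalization $g[u,\mathbf{Z}(u,w)]$.

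First I would introduce $\mathbf{Y}(u)=g[u,\mathbf{Z}(u,w)]$ and apply It\^o's lemma using (\ref{run}):
\begin{equation*}
d\mathbf{Y}(u)=\left(\frac{\partial g}{\partial u}+\frac{\partial g}{\partial \mathbf{Z}}\bm{\mu}+\mbox{$\frac{1}{2}$}\sum_{i=1}^I\sum_{j=1}^I(\sigma\sigma^T)_{ij}\frac{\partial^2 g}{\partial Z_i\partial Z_j}\right)du+\frac{\partial g}{\partial \mathbf{Z}}\bm{\sigma}\, d\mathbf{B}(u).
\end{equation*}
Since $g$ is $C^2$, positive, non-decreasing and vanishes at infinity, $\mathbf{Y}$ inherits the It\^o property and the drift above is integrable against the transition kernel of $\mathbf{Z}$.

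Second, I would construct the short-over transition density on $[u,u+\epsilon]$ through an Euler-Maruyama discretization. This density is Gaussian in the Brownian increment with mean $\bm{\mu}\,du$ and covariance $(\sigma\sigma^T)\,du$; taking its logarithm and passing to the continuum limit identifies an instantaneous Lagrangian $L(u,\mathbf{W},\mathbf{Z})$. The expected discounted score (\ref{2}) is then representable (by Riemann-Lebesgue as noted in the introduction) as a Feynman path integral of $\exp(\text{running reward}-\text{action})$ over trajectories of $\mathbf{Z}$, with the It\^o drift of $g$ contributing inside the exponent exactly the mixed terms $\partial g/\partial \mathbf{Z}\cdot \bm{\mu}$ and $\frac{1}{2}\sum_{ij}(\sigma\sigma^T)_{ij}\,\partial^2 g/\partial Z_i\partial Z_j$ that appear in the target equation.

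Third, I would take the first-order condition. Because $\mathbf{W}(u)=h(\mathbf{Z})$ is a Markov feedback, the maximization over $\mathbf{W}$ can be pushed inside the expectation and performed overwise; differentiating the integrand with respect to $W_i$ (using $\partial \mathbf{W}/\partial W_i=\ee_i$) gives precisely the stated FOC, after noting that both $\bm{\mu}$ and $\bm{\sigma}$ depend on $W_i$ only through $\mathbf{W}$, so the chain rule produces the $\partial\bm{\mu}/\partial\mathbf{W}\cdot \partial\mathbf{W}/\partial W_i$ and $\partial \bm{\sigma}^{ij}/\partial\mathbf{W}\cdot\partial\mathbf{W}/\partial W_i$ factors. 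Under the symmetry $\beta_i=\beta_j=\beta^\ast$, the scalar $\beta^\ast$ factors out of the running-reward term and the closed form follows by elementary algebra, using the initial condition $\mathbf{Z}^\ast_{I\times 1}=\zero$ from Assumption \ref{as1}.

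The main obstacle will be justifying the interchange of the maximum and the path-integral expectation, and making precise the sense in which the $C^2$ penalization $g$ substitutes for the raw run dynamics (so that the second-derivative It\^o correction genuinely appears in the FOC rather than being absorbed by the drift $\bm{\mu}$). Assumptions \ref{as0}-\ref{as1} furnish the linear growth, Lipschitz and adaptedness needed to legitimize Fubini-type interchanges; once these are in place the derivation of the FOC and the isolation of $\beta^\ast$ are routine.
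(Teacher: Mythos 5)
Your proposal follows essentially the same route as the paper: a short-over Feynman-type transition kernel built from the Lagrangian of the running reward plus the constrained run dynamics, a generalized It\^o expansion that converts the penalization $g[u,\mathbf{Z}(u,w)]$ into exactly the drift term $g_{\mathbf{Z}}\bm{\mu}$ and the second-order diffusion term $\frac{1}{2}\sum_{i,j}\bm{\sigma}^{ij}g_{Z_iZ_j}$, a Gaussian (Laplace-type) evaluation of the kernel yielding a Wick-rotated Schr\"odinger-type equation, and finally differentiation of the resulting integrand with respect to $W_i$ followed by the symmetry $\beta_i=\beta^*$ to isolate the coefficient. The only cosmetic difference is that the paper introduces $g$ by identifying it with the Lagrange-multiplier term $\lambda[\Delta\mathbf{W}(\nu)-\bm{\mu}\,d\nu-\bm{\sigma}\,d\mathbf{B}(\nu)]$ before applying It\^o's lemma, whereas you apply It\^o's lemma to $g$ directly; the function $f[u,\mathbf{W}(u),\mathbf{Z}(u,w)]$ whose $W_i$-derivative is set to zero, and the ensuing algebra for $\be^*(\mathbf{Z})$, are identical.
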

\begin{proof}
Using Equations (\ref{2}) and (\ref{run}), with the zero initial condition, the Lagrangian of run dynamics over a $50$-over match is,
\begin{multline*}
\mathcal{L}_{0,U}(\mathbf{Z})=
\int_0^{U}\E_{u}\left\{\sum_{i=1}^{I}\sum_{m=1}^M
\exp(-\rho_i m)\beta_iW_i(u)Z_{im}(u,w)du\right. \\
+\lambda(u+du)[\mathbf{W}(u+du)-\mathbf{W}(u)-
\bm{\mu}[u,\mathbf{W}(u),\mathbf{Z}(u,w)]du\\
\left.\phantom{\int}
-\bm{\sigma}[u,\bm{\sigma}_2^*,\mathbf{W}(u),\mathbf{Z}(u,w)]d\mathbf{B}(u)]\right\},
\end{multline*}
where $\lambda$ is a non-negative Lagrange multiplier. Subdivide $[0,U]$ into $n$ equal over-intervals $[u,u+\epsilon]$. For any positive $\epsilon$ and normalizing constant $N_u>0$, define the run transition function as 
\begin{equation}\label{3.0}
\Psi_{u,u+\epsilon}(\mathbf{Z})=
\frac{1}{N_u}\int_{\mathbb{R}^I}\exp
\left[-\epsilon\ \mathcal{L}_{u,u+\epsilon}(\mathbf{Z})\right]
\Psi_u(\mathbf{Z})d\mathbf{Z},
\end{equation}
where $\Psi_u(\mathbf{Z})$ is the run transition function at the beginning of $u$ and $N_u^{-1}\ d\mathbf{Z}$ is a finite Riemann measure such that,
\begin{equation}\label{3.2}
\Psi_{0,U}(\mathbf{Z})=\frac{1}{N_u^n}
\int_{\mathbb{R}^{I\times n}}\exp\left[
-\epsilon\sum_{k=1}^n\mathcal{L}_{u,u+\epsilon}^k(\mathbf{Z})\right]
\Psi_0(\mathbf{Z})\prod_{k=1}^nd\mathbf{Z}^k,
\end{equation}
with the finite measure $N_u^{-n}\prod_{k=1}^nd\mathbf{Z}^k$ 
and initial transition function $\Psi_0(\mathbf{Z})>0$ for all $n\in\mathbb{N}$ \citep{fujiwara2017}.

Define $\Delta \mathbf{W}(\nu)=\mathbf{W}(\nu+d\nu)-\mathbf{W}(\nu)$, then Fubuni's theorem implies,
\begin{multline}
\mathcal{L}_{u,\tau}(\mathbf{Z})=\E_{u}\ \int_0^{U}\left\{\sum_{i=1}^{I}
\sum_{m=1}^M\exp(-\rho_i m)\beta_iW_i(\nu)Z_{im}(\nu,w)d\nu\right. \\
\left.
\phantom{\int}
+\lambda[\Delta\mathbf{W}(\nu)-
\bm{\mu}[\nu,\mathbf{W}(\nu),\mathbf{Z}(\nu,w)]d\nu-
\bm{\sigma}[\nu,\bm{\sigma}_2^*,\mathbf{W}(\nu),\mathbf{Z}(\nu,w)]d\mathbf{B}(\nu)]
\right\},
\end{multline}
where $\tau=u+\epsilon$. As we assume the run dynamics has drift and diffusion parts, $\mathbf{Z} (\nu,w)$ is an It\^o process, and $\mathbf{W}$ is a Markov control measure of
valuation of players, there exists a smooth function $g[\nu,\mathbf{Z}(\nu,w)]\in C_0^2([0,50]\times \mathbb{R}^I)$ such that $\mathbf{Y}(\nu)=g[\nu,\mathbf{Z}(\nu,w)]$ where $\mathbf{Y}(\nu)$ is an It\^o process  \citep{oksendal2003}. Assuming 
\begin{multline*}
g[\nu+\Delta \nu,\mathbf{Z}(\nu,w)+\Delta \mathbf{Z}(\nu,w)]=\\ 
\lambda[\Delta\mathbf{W}(\nu)-\mu[\nu,\mathbf{W}(\nu),\mathbf{Z}(\nu,w)]d\nu-
\sigma[\nu,\bm{\sigma}_2^*,\mathbf{W}(\nu),\mathbf{Z}(\nu,w)] d\mathbf{B}(\nu)],
\end{multline*}
for a very small interval around $u$ with $\epsilon\downarrow 0$, generalized It\^o's Lemma yields,
\begin{eqnarray*}
\epsilon\mathcal{L}_{u,\tau}(\mathbf{Z}) & = &  
\E_{u}\left\{\sum_{i=1}^{I}\sum_{m=1}^M
\epsilon\exp(-\rho_i m)\beta_iW_i(u)Z_{im}(u,w)+\epsilon g[u,\mathbf{Z}(u,w)] \right.\\
  & & +\epsilon g_u[u,\mathbf{Z}(u,w)]+\epsilon 
  g_{\mathbf{Z}}[u,\mathbf{Z}(u,w)]\bm\mu[u,\mathbf{W}(u),\mathbf{Z}(u,w)] \\
 & & 
+\epsilon g_{\mathbf{Z}}[u,\mathbf{Z}(u,w)]
 \bm{\sigma}[u,\bm{\sigma}_2^*,\mathbf{W}(u),\mathbf{Z}(u,w)]
 \Delta\mathbf{B}(u)\\ 
 & & \left.+\mbox{$\frac{1}{2}$} 
\sum_{i=1}^I\sum_{j=1}^I\epsilon
\bm{\sigma}^{ij}[u,\bm{\sigma}_2^*,\mathbf{W}(u),\mathbf{Z}(u,w)]
g_{Z_iZ_j}[u,\mathbf{Z}(u,w)]+o(\epsilon)\right\},
\end{eqnarray*}
where $\bm\sigma^{ij}[u,\bm{\sigma}_2^*,\mathbf{W}(u),\mathbf{Z}(u,w)]$ represents $\{i,j\}^{th}$ component of the variance-covarience matrix, $g_u=\partial g/\partial u$, $g_{\mathbf{Z}}=\partial g/\partial \mathbf{Z}$ and $g_{Z_iZ_j}=\partial^2 g/(\partial Z_i\ \partial Z_j)$, $\Delta B_i\ \Delta B_j=\delta^{ij}\ \epsilon$, $\Delta B_i\ \epsilon=\epsilon\ \Delta B_i=0$, and $\Delta Z_i(u)\ \Delta Z_j(u)=\epsilon$, where $\delta^{ij}$ is the Kronecker delta function. As $\E_u[\Delta \mathbf{B}(u)]=0$ and $\E_u[o(\epsilon)]/\epsilon\ra 0$, for $\epsilon\downarrow 0$, with the vector of initial conditions $\mathbf{0}_{I\times 1}$ dividing throughout by  $\epsilon$ and taking the conditional expectation we get,
\begin{eqnarray*}
\mathcal{L}_{u,\tau}(\mathbf{Z}) & = &
\sum_{i=1}^{I}\sum_{m=1}^M\exp(-\rho_i m)\beta_iW_i(u)Z_{im}(u,w)
+g[u,\mathbf{Z}(u,w)] \\ 
 & & +g_u[u,\mathbf{Z}(u,w)]
+ g_{\mathbf{Z}}[u,\mathbf{Z}(u,w)]\bm{\mu}[u,\mathbf{W}(u),\mathbf{Z}(u,w)] \\
 & & +\mbox{$\frac{1}{2}$}\sum_{i=1}^I\sum_{j=1}^I
 \bm{\sigma}^{ij}[u,\bm{\sigma}_2^*,\mathbf{W}(u),\mathbf{Z}(u,w)]
 g_{Z_iZ_j}[u,\mathbf{Z}(u,w)]+o(1).
\end{eqnarray*}
Suppose, there exists a vector $\mathbf{\xi}_{I\times 1}$  such that $\mathbf{Z}(u,w)_{I\times 1}=\mathbf{Z}(\tau,w)_{I\times 1}+\xi_{I\times 1}$. For a number $0<\eta<\infty$ assume $|\xi|\leq\eta\epsilon [\mathbf{Z}^T(u,w)]^{-1}$, which makes $\xi$ a very small number for each $\epsilon\downarrow 0$ and 
after defining a $C^2$ function 
\begin{eqnarray*}
f[u,\mathbf{W}(u),\xi] & = & 
\sum_{i=1}^{I}\sum_{m=1}^M\exp(-\rho_i m)\beta_iW_i(u)
[Z_{im}(\tau,w)+\xi]\\ 
 & & +g[u,\mathbf{Z}(\tau,w)+\xi]+g_u[u,\mathbf{Z}(\tau,w)+\xi] \\
 & & +g_{\mathbf{Z}}[u,\mathbf{Z}(\tau,w)+\xi]
 \bm{\mu}[u,\mathbf{W}(u),\mathbf{Z}(\tau,w)+\xi]\\
  & & +\mbox{$\frac{1}{2}$}\sum_{i=1}^I\sum_{j=1}^I
\bm{\sigma}^{ij}[u,\bm{\sigma}_2^*,\mathbf{W}(u),\mathbf{Z}(\tau,w)+\xi]\times \\
 & & g_{Z_iZ_j}[u,\mathbf{Z}(\tau,w)+\xi],
\end{eqnarray*} 
we have,
\begin{multline}\label{8}
\Psi_u^\tau(\mathbf{Z})+
\epsilon\frac{\partial \Psi_u^\tau(\mathbf{Z})}{\partial u} =
\frac{1}{N_u}\Psi_u^\tau(\mathbf{Z}) 
\int_{\mathbb{R}^{I}}\exp\left\{-\epsilon f[u,\mathbf{W}(u),\xi]\right\}d\xi\\
+\frac{1}{N_u}\frac{\partial \Psi_u^\tau(\mathbf{Z})}{\partial \mathbf{Z}}
 \int_{\mathbb{R}^I} 
\xi\exp\left\{-\epsilon f[u,\mathbf{W}(u),\xi]\right\} d\xi
 +o(\epsilon^{1/2}).
\end{multline}
For $\epsilon\downarrow0$, $\Delta \mathbf{Z}\downarrow0$ and,
\begin{multline*}
f[u,\mathbf{W}(u),\xi]=f[u,\mathbf{W}(u),\mathbf{Z}(\tau,w)]+
\sum_{i=1}^{I}f_{Z_i}[u,\mathbf{W}(u),\mathbf{Z}(\tau,w)][\xi_i-Z_i(\tau,w)]\\
+\mbox{$\frac{1}{2}$}\sum_{i=1}^{I}\sum_{j=1}^{I}
f_{Z_iZ_j}[u,\mathbf{W}(u),\mathbf{Z}(\tau,w)]
[\xi_i-Z_i(\tau,w)][\xi_j-Z_j(\tau,w)]+o(\epsilon),
\end{multline*}
assume there exists a symmetric, positive definite and non-singular Hessian matrix $\mathbf{\Theta}_{I\times I}$ and a vector $\mathbf{R}_{I\times 1}$ such that,
\begin{multline*}
\Psi_u^\tau(\mathbf{Z})+\epsilon\frac{\partial \Psi_u^\tau(\mathbf{Z})}{\partial u}
=\frac{1}{N_u}\sqrt{\frac{(2\pi)^{I}}{\epsilon |\mathbf{\Theta}|}}
\exp\{-\epsilon f[u,\mathbf{W}(u),\mathbf{Z}(\tau,w)]+
\mbox{$\frac{1}{2}$}\epsilon\mathbf{R}^T\mathbf{\Theta}^{-1}\mathbf{R}\}\\
\times
\left\{\Psi_u^\tau(\mathbf{Z})+[\mathbf{Z}(\tau,w)+\mbox{$\frac{1}{2}$}
(\mathbf{\Theta}^{-1}\mathbf{R})]
\frac{\partial \Psi_{\mathbf{u}}^\tau(\mathbf{Z})}{\partial \mathbf{Z}}\right\}
+o(\epsilon^{1/2}).
\end{multline*}
Assuming $N_u=\sqrt{(2\pi)^{I}/(\epsilon |\mathbf{\Theta}|)}>0$, we get Wick rotated 
Schr\"odinger type equation as,
\begin{multline}\label{13}
\Psi_u^\tau(\mathbf{Z})+\epsilon\frac{\partial \Psi_u^\tau(\mathbf{Z})}{\partial u}= 
\{1-\epsilon f[u,\mathbf{W}(u),\mathbf{Z}(\tau,w)]+
\mbox{$\frac{1}{2}$}\epsilon\mathbf{R}^T\mathbf{\Theta}^{-1}\mathbf{R}\}\times \\
\left\{\Psi_u^\tau(\mathbf{Z})+[\mathbf{Z}(\tau,w)+\mbox{$\frac{1}{2}$} 
(\mathbf{\Theta}^{-1}\mathbf{R})]
\frac{\partial \Psi_{\mathbf{u}}^\tau(\mathbf{Z})}{\partial \mathbf{Z}}\right\}
+o(\epsilon^{1/2}).
\end{multline}
For any finite positive number $\eta$ we know $\mathbf{Z}(\tau,w)\leq\eta\epsilon|\xi^T|^{-1}$. Then there exists $|\mathbf{\Theta}^{-1}\mathbf{R}|\leq 2 \eta\epsilon|1-\xi^T|^{-1}$ such that for $\epsilon\downarrow 0$ we have, $\big|\mathbf{Z}(\tau,w)+\mbox{$\frac{1}{2}$}\ \left(\mathbf{\Theta}^{-1}\ \mathbf{R}\right)\big|\leq\eta\epsilon$, for $|\mathbf{\Theta}^{-1}\mathbf{R}|\leq 2 \eta\epsilon|1-\xi^T|^{-1}$, where $\xi^T$ is the transposition of $\xi$ and differentiating Equation (\ref{13}) with respect to $W_i$  yields,
\begin{equation}\label{16}
-\frac{\partial}{\partial W_i}
f[u,\mathbf{W}(u),\mathbf{Z}(\tau,w)]\Psi_u^\tau(\mathbf{Z})=0.
\end{equation}
In Equation (\ref{16}) as $\Psi_u^\tau(\mathbf{Z})$ is a transition function $\Psi_u^\tau(\mathbf{Z})\neq0$.  Hence,\\ $\frac{\partial }{\partial W_i}f[u,\mathbf{W}(u),\mathbf{Z}(\tau,w)]=0$. We know, $\mathbf{Z}(\tau,w)=\mathbf{Z}(u,w)-\xi$ and for $\xi\ra 0$ as we are looking for some stable solution therefore, in Equation (\ref{16}) $\mathbf{Z}(\tau,w)$ can be replaced by $\mathbf{Z}(u,w)$. Therefore,
\begin{multline}\label{16.1}
f[u,\mathbf{W}(u),\mathbf{Z}(u,w)]=
\sum_{i=1}^{I}\sum_{m=1}^M\exp(-\rho_i m)\beta_iW_i(u)Z_{im}(u,w)\\
+g[u,\mathbf{Z}(u,w)]+ g_u[u,\mathbf{Z}(u,w)]+
g_{\mathbf{Z}}[u,\mathbf{Z}(u,w)]\bm{\mu}[u,\mathbf{W}(u),\mathbf{Z}(u,w)]\\
+\mbox{$\frac{1}{2}$}\sum_{i=1}^I\sum_{j=1}^I
\bm{\sigma}^{ij}[u,\bm{\sigma}_2^*,\mathbf{W}(u),\mathbf{Z}(u,w)]
g_{Z_iZ_j}[u,\mathbf{Z}(u,w)].
\end{multline}
Equations (\ref{16}) and (\ref{16.1}) imply
\begin{multline}\label{16.2}
\sum_{i=1}^{I}\sum_{m=1}^M\exp(-\rho_i m)\beta_iZ_{im}(u,w)\\
+g_{\mathbf{Z}}[u,\mathbf{Z}(u,w)]
\frac{\partial \bm{\mu}[u,\mathbf{W}(u),\mathbf{Z}(u,w)]}{\partial \mathbf{W}}
\frac{\partial \mathbf{W}}{\partial W_i }\\ 
+\mbox{$\frac{1}{2}$}\sum_{i=1}^I\sum_{j=1}^I
g_{Z_iZ_j}[u,\mathbf{Z}(u,w)]
\frac{\partial \bm{\sigma}^{ij}[u,\bm{\sigma}_2^*,\mathbf{W}(u),\mathbf{Z}(u,w)]}
{\partial \mathbf{W}}
\frac{\partial \mathbf{W}}{\partial W_i }=0.
\end{multline}
Assume $\beta_i=\beta_j=\beta^*$ for all $i\neq j$ then,
\begin{multline*}
\be^*(\mathbf{Z})=
-\left[\sum_{i=1}^{I}\ \sum_{m=1}^M\ \exp(-\rho_i m)Z_{im}(u,w)\right]^{-1}\times\\
\left[g_{\mathbf{Z}}[u,\mathbf{Z}(u,w)]
\frac{\partial \bm{\mu}[u,\mathbf{W}(u),\mathbf{Z}(u,w)]}{\partial \mathbf{W}}
\frac{\partial \mathbf{W}}{\partial W_i }
\right. \\ 
\left.+\mbox{$\frac{1}{2}$}\sum_{i=1}^I\sum_{j=1}^I
g_{Z_iZ_j}[u,\mathbf{Z}(u,w)]
\frac{\partial \bm{\sigma}^{ij}[u,\bm{\sigma}_2^*,\mathbf{W}(u),\mathbf{Z}(u,w)]}
{\partial \mathbf{W}}
\frac{\partial \mathbf{W}}{\partial W_i }\right].
\end{multline*} 
\end{proof}

\subsection{Match with interruption by rain} 
Suppose, for a $[0,U]$ over one-day match the game stops after $\widetilde U-1$ overs because of the rain. After that there are two possibilities: first, if the rain is heavy, the game will not resume; secondly, if the rain is not heavy and stops after certain point of time then, after  getting water out of the field, the match might be resumed. Based on the severity of the rain and the equipment used to get the water out from the field the match resumes for $(\widetilde U,U-\varepsilon]$ overs where $\varepsilon\geq 0$. 

\begin{definition}\label{de3}
For a probability space $(\Omega, \mathcal{F}_u^{\mathbf{Z}},\mathcal{P})$ with sample space $\Omega$, filtration at $u^{th}$ over of run ${\mathbf{Z}}$ as $\{\mathcal{F}_u^{\mathbf{Z}}\}\subset\mathcal{F}_u$, a probability measure $\mathcal{P}$ and a Brownian motion for rain $\mathbf{B}_u$ with the form $\mathbf{B}_u^{-1}(E)$ such that for $u\in[\widetilde U,U-\varepsilon]$, $E\subseteq\mathbb{R}$ is a Borel set. If $\widetilde{U}$ is the game stopping over and ${b}\in \mathbb{R}$ is a rain measure then $\widetilde U:=\inf\{u\geq 0|\ \mathbf{B}_u>{b}\}$. 
\end{definition} 

\begin{defn}\label{de4}
Let $\delta_{\mathfrak u}:[\widetilde U,U-\varepsilon]	\ra(0,\infty)$ be a $C^2(u\in[\widetilde U,U-\varepsilon])$ over-process of a one-day match such that, it replaces stochastic process by It\^o's Lemma. Then $\delta_{\mathfrak u}$ is a stochastic gauge of that match if  $\widetilde U:=\mathfrak u+\delta_{\mathfrak u}$ is a stopping over for each $\mathfrak u\in[\widetilde U,U-\varepsilon]$ and $\mathbf B_{\mathfrak u}>b$, where $\mathfrak{u}$ is the new over after resampling the stochastic interval $[\widetilde U,U-\varepsilon]$.
\end{defn}

\begin{defn}\label{de5}
Given a stochastic over interval $\hat I=[\widetilde U,U-\varepsilon]\subset\mathbb{R}$, a stochastic tagged partition of a one-day match is a finite set of ordered pairs $\mathcal{D}=\{(\mathfrak u_i,\hat I_i):\ i=1,2,...,p\}$ such that $\hat I_i=[x_{i-1},x_i]\subset[\widetilde U,U-\varepsilon]$, $\mathfrak u_i\in \hat I_i$, $\cup_{i=1}^p \hat I_i=[\widetilde U,U-\varepsilon]$ and for $i\neq j$ we have $\hat I_i\cap \hat I_j=\{\emptyset\}$. The point $\mathfrak u_i$ is the tag partition of the stochastic over-interval $\hat I_i$.
\end{defn}

\begin{defn}\label{de6}
If $\mathcal{D}=\{(\mathfrak u_i,\hat I_i): i=1,2,...,p\}$ is a tagged partition of stochastic over-interval $\hat I$ and $\delta_{\mathfrak u}$ is a stochastic gauge on $\hat I$, then $\mathcal D$ is a stochastic $\delta$-fine if $\hat I_i\subset \delta_{\mathfrak u}(\mathfrak u_i)$ for all $i=1,2,...,p$, where $\delta(\mathfrak u)=(\mathfrak u-\delta_{\mathfrak u}(\mathfrak u),\mathfrak u+\delta_{\mathfrak u}(\mathfrak u))$.
\end{defn}

For a tagged partition defined in Definitions \ref{de5} and \ref{de6}, and a function $\tilde f:[\widetilde U,U-\varepsilon]\times \mathbb{R}^{2I\times \widehat U}\times\Omega\ra\mathbb{R}^{I\times \hat U}$ the Riemann sum of $\mathcal D$ is defined as 
$
S(\tilde f,\mathcal D)=(\mathcal D_\delta)\sum \tilde 
f(\mathfrak u,\hat I,\mathbf W,\mathbf Z)=
\sum_{i=1}^p \tilde f(\mathfrak u_i,\hat I_i,\mathbf W,\mathbf Z),
$
where $\mathcal{D}_\delta$ is a $\delta$-fine division of 
$\mathbb{R}^{I\times \widehat U}$ 
with point-cell function 
$\tilde f(\mathfrak u_i,\hat I_i,\mathbf W,\mathbf Z)=
\tilde f(\mathfrak u_i,\mathbf W,\mathbf Z)\ell(\hat I_i)$, 
where $\ell$ is the length of the over interval and 
$\hat{U}=(U-\varepsilon)-\tilde{U}$ \citep{kurtz2004}.

\begin{defn}\label{de7}
An over integrable function $\tilde f(\mathfrak u,\hat I,\mathbf W,\mathbf Z)$ on $\mathbb{R}^{I\times \widehat U}$, with integral 
$
\mathbf a=\int_{\widetilde U}^{U-\varepsilon}\tilde{f}(\mathfrak u, \hat I,\mathbf W,\mathbf Z)
$
is stochastic Henstock-Kurzweil type integrable on $\hat I$ if, for a given vector $\hat{\bm\epsilon}>0$, there exists a stochastic $\delta$-gauge in $[\widetilde U,U-\varepsilon]$ such that for each stochastic $\delta$-fine partition $\mathcal D_\delta$ in $\mathbb R^{I\times \widehat U}$ we have, \\
$
\E_{\mathfrak u}\left\{\left|\mathbf a-(\mathcal D_\delta)\sum \tilde f(\mathfrak u,\hat I,\mathbf W,\mathbf Z)\right|\right\}<\hat{\bm\epsilon},
$
where $\E_{\mathfrak u}$ is the conditional expectation on run $\mathbf{Z}$ at sample over $\mathfrak u\in[\widetilde U,U-\varepsilon]$ of a non-negative function $\tilde f$ after the rain stops.
\end{defn}

\begin{prop}\label{p1}
Define 
$
\mathfrak h=\exp\left\{-
\tilde\epsilon\E_{\mathfrak u}\left[\int_{\mathfrak u}^{\mathfrak u+
\tilde\epsilon}\tilde f(\mathfrak u,\hat I,\mathbf W,\mathbf Z)\right]\right\}
\Psi_{\mathfrak u}(\mathbf{Z})d\mathbf Z.
$
If for a small sample over interval $[\mathfrak u,\mathfrak u+\tilde\epsilon]$, 
$
\frac{1}{N_{\mathfrak u}}\int_{\mathbb R^{2I\times\widehat U\times I}}\mathfrak h
$
exists for a conditional gauge $\gamma=[\delta,\omega(\delta)]$, then the indefinite integral of $\mathfrak h$, 
$
\mathbf H(\mathbb R^{2I\times\widehat U\times I})=\frac{1}{N_{\mathfrak u}}\int_{\mathbb R^{2I\times\widehat U\times I}}\mathfrak h
$
exists as Stieltjes function in $ \mathbf E([\mathfrak u,\mathfrak u+\tilde\epsilon]
\times \mathbb R^{2I\times\widehat U}\times \Omega\times\mathbb R^I)$ for all $N_{\mathfrak u}>0$.
\end{prop}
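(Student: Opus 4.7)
The plan is to establish existence of $\mathbf H$ by a systematic use of the Henstock--Kurzweil machinery set up in Definitions \ref{de3}--\ref{de7}, combined with a Saks--Henstock restriction argument. First I would fix the conditional gauge $\gamma=[\delta,\omega(\delta)]$ and regard $\mathfrak h$ as a point--cell function on $[\mathfrak u,\mathfrak u+\tilde\epsilon]\times\mathbb R^{2I\times\widehat U}\times\Omega\times\mathbb R^I$. By hypothesis, the Riemann-type sum $(\mathcal D_\gamma)\sum\mathfrak h$ converges in conditional mean to $N_{\mathfrak u}\,\mathbf H(\mathbb R^{2I\times\widehat U\times I})$. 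The first substantive step is to upgrade this full-cell convergence to cell-wise convergence: for every sub-cell $E$ of $\mathbb R^{2I\times\widehat U\times I}$, a Saks--Henstock inequality, obtained by restricting any $\gamma$-fine tagged partition of the ambient cell to those $(\mathfrak u_i,\hat I_i)$ whose tags lie in $E$, yields a Cauchy family of partial sums whose limit defines the candidate value $\mathbf H(E)$.

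The second step is to verify the algebraic and order properties required of a Stieltjes function. For disjoint sub-cells $E_1,E_2$ with $E=E_1\cup E_2$, a $\gamma$-fine partition of $E$ can, after at most a single common refinement still compatible with $\delta$, be split into $\gamma$-fine partitions of $E_1$ and $E_2$; passing to the limit delivers finite additivity $\mathbf H(E)=\mathbf H(E_1)+\mathbf H(E_2)$. Strict positivity of $\Psi_{\mathfrak u}(\mathbf Z)$, the hypothesis $N_{\mathfrak u}>0$, and the strict positivity of the exponential then force $\mathbf H\ge 0$ and hence monotonicity on the semiring of stochastic cells generated by Definition \ref{de5}. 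Countable additivity follows by combining monotonicity with the domination $\mathfrak h\le \Psi_{\mathfrak u}(\mathbf Z)\,d\mathbf Z$, via the standard argument that a bounded, finitely additive, monotone set function built from a Henstock-integrable integrand is automatically $\sigma$-additive.

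The third step is to identify the set function $\mathbf H$ with a Stieltjes function in $\mathbf E([\mathfrak u,\mathfrak u+\tilde\epsilon]\times\mathbb R^{2I\times\widehat U}\times\Omega\times\mathbb R^I)$. This amounts to exhibiting $\mathbf H$ as the cell-distribution of a non-decreasing, right-continuous point function on the product cells. The non-decreasing property is inherited from the positivity established above; right-continuity is obtained by squeezing any stochastic cell between a decreasing sequence of slightly enlarged cells and invoking the monotonicity together with the uniform bound from the exponential. The normalisation by $N_{\mathfrak u}>0$ ensures the resulting Stieltjes function remains finite on the whole ambient product space.

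The main obstacle is that $\mathfrak h$ is not a fixed Borel function of $(\mathfrak u,\hat I,\mathbf W,\mathbf Z)$: the conditional expectation $\E_{\mathfrak u}[\,\cdot\,]$ sitting inside the exponential couples the integrand to the filtration $\mathcal F_{\mathfrak u}^{\mathbf Z}$ and, through the tag $\mathfrak u_i$, to the very partition being summed. Justifying the interchange of the Riemann sum limit with this conditional expectation is the delicate point; I would address it by exploiting the stochastic $\delta$-fineness of Definition \ref{de6}, which synchronises each tag $\mathfrak u_i$ with the $\sigma$-algebra used for conditioning, together with the exponential domination of $\mathfrak h$, so that conditional dominated convergence applies at every step of the Saks--Henstock reduction and gives the claimed Stieltjes representation.
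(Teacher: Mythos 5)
Your core argument is essentially the paper's own: a restriction (Saks--Henstock) step that produces Cauchy partial sums over sub-cells --- the paper invokes Theorem~3 of \cite{muldowney2012} and derives the bound $|\be_1-\be_2|\leq|\mathfrak u-\mathfrak u'|$ --- followed by finite additivity $\mathbf H(M)=\mathbf H(M^1)+\mathbf H(M^2)$ of the resulting cell function, which is exactly what the paper means by ``Stieltjes.'' Your second and third steps (monotonicity, countable additivity, right-continuity) and your discussion of interchanging the Riemann-sum limit with $\E_{\mathfrak u}$ go beyond what the paper proves, but they are consistent elaborations rather than a different route.
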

\begin{proof}
Define a gauge $\gamma=[\delta,\omega(\delta)]$ for all possible combinations of a $\delta$ gauge in $[\widetilde U,U-\varepsilon]\times \mathbb R^{2I\times\widehat U}\times \Omega$ and $\omega(\delta)$-gauge in $\mathbb R^{2I\times\widehat U\times I}$ such that it is a cell in $[\widetilde U,U-\varepsilon]\times \mathbb R^{2I\times\widehat U}\times \Omega\times\mathbb R^{2I\times\widehat U\times I}$, where $\omega(\delta):\mathbb{R}^{2I\times\widehat U\times I}\ra(0,\infty)^{2I\times\widehat U\times I}$ is at least a $C^1$ function. The reason behind considering $\omega(\delta)$ as a function of $\delta$ is because, after rain stops, if the $u^{th}$ over proceeds then we can get a corresponding sample over $\mathfrak{u}$ and the batsman has the opportunity to score run. Let $\mathcal D_\gamma$ be a stochastic $\gamma$-fine in cell $\mathbf E$ in $[\widetilde U,U-\varepsilon]\times \mathbb R^{2I\times\widehat U}\times \Omega\times\mathbb R^I$. For any $\bm{\varepsilon}>0$ and for a $\delta$-gauge in $[\widetilde U,U-\varepsilon]\times \mathbb R^{2I\times\widehat U}\times \Omega$ and $\omega(\delta)$-gauge in $\mathbb R^{2I\times\widehat U\times I}$ choose a $\gamma$ so that 
$
|\frac{1}{N_{\mathfrak u}}(\mathcal D_\gamma)\sum \mathfrak h-\mathbf H(\mathbb R^{2I\times\widehat U\times I})|<\mbox{$\frac{1}{2}$}|\mathfrak u-\mathfrak u'|,
$
where $\mathfrak u'=\mathfrak u+\tilde{\epsilon}$. Assume two disjoint sets $E^a$ and $E^b=[\mathfrak u,\mathfrak u+\tilde{\epsilon}]\times \mathbb R^{2I\times\widehat U}\times \Omega\times\mathbb \{R^I\setminus E^a\}$ such that $E^a\cup E^b=E$ . As the domain of $\tilde f$ is a $2$-sphere, Theorem $3$ in \cite{muldowney2012} implies there is a gauge $\gamma_a$ for set $E^a$ and a gauge $\gamma_b$ for set $E^b$ with $\gamma_a\prec\gamma$ and $\gamma_b\prec\gamma$, so that both the gauges conform in their respective sets. For every $\delta$-fine in $[\mathfrak u,\mathfrak u']\times\mathbb{R}^{2I\times\widehat U}\times\Omega$ and a positive 
$\tilde{\epsilon}=|\mathfrak u-\mathfrak u'|$, 
if a $\gamma_a$-fine division $\mathcal D_{\gamma_a}$ is of the set $E^a$ and $\gamma_b$-fine division $\mathcal D_{\gamma_b}$ is of the set $E^b$, then by the restriction axiom we know that $\mathcal D_{\gamma_a}\cup\mathcal D_{\gamma_b}$ is a $\gamma$-fine division of $E$. Furthermore, as $E^a\cap E^b=\emptyset$
\begin{align}
\mbox{$\frac{1}{N_{\mathfrak u}}$}\left(\mathcal D_{\gamma_a}\cup\mathcal D_{\gamma_b}\right)\sum\mathfrak h=\mbox{$\frac{1}{N_{\mathfrak u}}$}\left[(\mathcal D_{\gamma_a})\sum \mathfrak h+(\mathcal D_{\gamma_b})\sum\mathfrak h\right]=\a+\be.\notag
\end{align}
Let us assume that for every $\delta$-fine we can subdivide the set $E^b$ into two disjoint subsets $E_1^b$ and $E_2^b$ with their $\gamma_b$-fine divisions given by $\mathcal D_{\gamma_b}^1$ and $\mathcal D_{\gamma_b}^2$, respectively. Therefore, their Riemann sum can be written as $\be_1=\frac{1}{N_{\mathfrak u}}(\mathcal D_{\gamma_b}^1)\sum\mathfrak h$ and $\be_2=\frac{1}{N_{\mathfrak u}}(\mathcal D_{\gamma_b}^2)\sum\mathfrak h$, respectively. Hence, for a small sample over interval $[\mathfrak u,\mathfrak u']$,
$
\big|\a+\be_1-\mathbf H(\mathbb R^{2I\times\widehat U\times I})\big|\leq\mbox{$\frac{1}{2}$}|\mathfrak u-\mathfrak u'|
$
and
$
\big|\a+\be_2-\mathbf H(\mathbb R^{2I\times\widehat U\times I})\big|\leq\mbox{$\frac{1}{2}$}|\mathfrak u-\mathfrak u'|.
$
Therefore,
\begin{eqnarray}\label{Cauchy}
|\be_1-\be_2| & = & 
\left|\left[\a+\be_1-\mathbf H(\mathbb R^{2I\times\widehat U\times I})\right]-
\left[\a+\be_2-\mathbf H(\mathbb R^{2I\times\widehat U\times I})\right]\right|\notag \\
 & \leq & \left|\a+\be_1-\mathbf H(\mathbb R^{2I\times\widehat U\times I})\right|+
 \left|\a+\be_2-\mathbf H(\mathbb R^{2I\times\widehat U\times I})\right| \notag \\
  & \leq & |\mathfrak u-\mathfrak u'|.
\end{eqnarray}
Equation (\ref{Cauchy}) implies that the Cauchy integrability of $\mathfrak h$ is satisfied, and \\
$
\mathbf H(\mathbb R^{2I\times\widehat U\times I})=
\frac{1}{N_{\mathfrak u}}\int_{\mathbb R^{2I\times\widehat U\times I}}\mathfrak h.
$
Now consider two disjoint set $M^1$ and $M^2$ in $\mathbb R^{2I\times\widehat U\times I}$ such that $M=M^1\cup M^2$ with their corresponding integrals $\mathbf H(M^1)$, $\mathbf H(M^2)$, and $\mathbf H(M)$. Suppose $\gamma$-fine divisions of $M^1$ and $M^2$ are given by $\mathcal D_{\gamma_1}$ and $\mathcal D_{\gamma_2}$, respectively, with their Riemann sums for $\mathfrak h$ are $m_1$ and $m_2$. Equation (\ref{Cauchy}) implies, $\big|m_1-\mathbf H(M^1)\big|\leq\big|\mathfrak u-\mathfrak u'\big|$ and $\big|m_2-\mathbf H(M^2)\big|\leq\big|\mathfrak u-\mathfrak u'\big|$. Hence, $\mathcal D_{\gamma_1}\cup\mathcal D_{\gamma_2}$ is a $\gamma$-fine division of $M$. Let $m=m_1+m_2$ then Equation (\ref{Cauchy}) implies $\big|m-\mathbf H(M)\big|\leq |\mathfrak u-\mathfrak u'|$ and
\begin{eqnarray*}
|[\mathbf H(M^1)+\mathbf H(M^2)]-\mathbf H(M)| & \leq &
|m-\mathbf{H}(M)|+|m_1-\mathbf H(M^1)|+ \\
 & & |m_2-\mathbf{H}(M^2)| \\
 & \leq & 3|\mathfrak u-\mathfrak u'|.
\end{eqnarray*}
Therefore, $\mathbf H(M)=\mathbf H(M^1)+\mathbf H(M^2)$ and it is Stieljes.
\end{proof}

\begin{coro}\label{c0}
If $\mathfrak h$ is integrable on $\mathbb R^{2I\times\widehat U\times I}$ as in Proposition \ref{p1}, then for a given small continuous sample over the interval $[\mathfrak u,\mathfrak u']$ with 
$\tilde{\epsilon}=\mathfrak u'-\mathfrak u>0$, 
there exists a $\gamma$-fine division $\mathcal D_\gamma$ in $\mathbb R^{2I\times\widehat U\times I}$ such that,
\[
|(\mathcal D_\gamma)\mathfrak h[\mathfrak u,\hat I,\hat I(\mathbf Z),\mathbf W,\mathbf Z]-\mathbf H(\mathbb R^{2I\times\widehat U\times I})|\leq\mbox{$\frac{1}{2}$}|\mathfrak u-\mathfrak u'|<\tilde{\epsilon},
\]
where $\hat I(\mathbf Z)$ is the interval of run $\mathbf Z$ in $\mathbb R^{2I\times\widehat U\times I}$. This integral is a stochastic It\^o-Henstock-Kurtzweil-McShane-Feynman-Liouville type path integral in run dynamics of a sample over after the beginning of an one-day match after rain interruption.
\end{coro}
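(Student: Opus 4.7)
The plan is to derive this as a direct consequence of Proposition \ref{p1}, which has already established the existence of $\mathbf{H}(\mathbb R^{2I\times\widehat U\times I})=\frac{1}{N_{\mathfrak u}}\int_{\mathbb R^{2I\times\widehat U\times I}}\mathfrak h$ as a Stieltjes function on the relevant cell. The corollary asks only for the Riemann-sum control estimate on a small sample over-interval $[\mathfrak u,\mathfrak u']$; once the integral exists, the gauge machinery developed in the statement of Definition \ref{de7} and in the proof of Proposition \ref{p1} supplies such a division almost automatically.

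First I would fix $\tilde\epsilon=\mathfrak u'-\mathfrak u>0$ and invoke the existence of $\mathbf{H}$ from Proposition \ref{p1}. By Definition \ref{de7}, integrability of $\mathfrak h$ means that for every prescribed tolerance there is a stochastic gauge $\gamma=[\delta,\omega(\delta)]$ on $[\widetilde U,U-\varepsilon]\times\mathbb{R}^{2I\times\widehat U}\times\Omega\times\mathbb{R}^{2I\times\widehat U\times I}$ whose $\gamma$-fine divisions satisfy the required Riemann-sum inequality. Choosing the tolerance to be $\tfrac12|\mathfrak u-\mathfrak u'|$, exactly as was done in the proof of Proposition \ref{p1} when deriving the Cauchy-type bound in Equation (\ref{Cauchy}), produces a $\gamma$-fine division $\mathcal D_\gamma$ in $\mathbb{R}^{2I\times\widehat U\times I}$ for which
\[
\bigl|(\mathcal D_\gamma)\,\mathfrak h[\mathfrak u,\hat I,\hat I(\mathbf Z),\mathbf W,\mathbf Z]-\mathbf H(\mathbb R^{2I\times\widehat U\times I})\bigr|\leq\mbox{$\frac{1}{2}$}|\mathfrak u-\mathfrak u'|.
\]
The strict inequality $\tfrac12|\mathfrak u-\mathfrak u'|<\tilde\epsilon$ is then immediate from $\tilde\epsilon=|\mathfrak u-\mathfrak u'|>0$.

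Second, I would justify the path-integral interpretation in the last sentence of the statement. The integrand $\mathfrak h$ is the exponential of a conditional expectation of $\tilde f(\mathfrak u,\hat I,\mathbf W,\mathbf Z)$ against the transition kernel $\Psi_{\mathfrak u}(\mathbf Z)\,d\mathbf Z$, so it inherits the Feynman path-integral structure used in Proposition \ref{p0} and in Equations (\ref{3.0})--(\ref{3.2}). Because the gauge $\gamma=[\delta,\omega(\delta)]$ couples a $\delta$-fine control in the rainfall-adjusted over variable (Ito-Henstock-Kurzweil type) with an $\omega(\delta)$-fine control in the $\mathbf W,\mathbf Z$ variables (McShane type), the limit $\mathbf H(\mathbb R^{2I\times\widehat U\times I})$ is identified as a stochastic Ito-Henstock-Kurzweil-McShane-Feynman-Liouville path integral along the sample-over dynamics resumed after the rain interruption.

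The principal obstacle, in my view, is not the inequality itself (which reduces to the Cauchy-type argument already recorded in Equation (\ref{Cauchy})) but verifying that the compound gauge $\gamma=[\delta,\omega(\delta)]$ can indeed be chosen so that the $\delta$-component is adapted to the small sample-over window $[\mathfrak u,\mathfrak u']$ while the $\omega(\delta)$-component remains $C^1$ and non-degenerate on the unbounded cell $\mathbb{R}^{2I\times\widehat U\times I}$; this is the place where one must appeal to the restriction axiom invoked in Proposition \ref{p1} (via Theorem 3 of \cite{muldowney2012}) to splice together gauges on disjoint subcells without losing $\gamma$-fineness. Once that gauge-compatibility step is in place, the stated estimate and the identification as a stochastic Feynman-Liouville path integral follow without further work.
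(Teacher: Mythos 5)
Your proposal is correct and follows essentially the same route the paper takes: the paper gives no separate proof of Corollary \ref{c0}, reading it off directly from the gauge choice in the proof of Proposition \ref{p1}, where $\gamma$ is selected so that $\big|\frac{1}{N_{\mathfrak u}}(\mathcal D_\gamma)\sum \mathfrak h-\mathbf H(\mathbb R^{2I\times\widehat U\times I})\big|<\frac{1}{2}|\mathfrak u-\mathfrak u'|$, which is precisely your step of invoking Definition \ref{de7} with tolerance $\frac{1}{2}|\mathfrak u-\mathfrak u'|$ and then noting $\frac{1}{2}|\mathfrak u-\mathfrak u'|<\tilde\epsilon$. Your added remarks on splicing the compound gauge via the restriction axiom are consistent with (and slightly more careful than) what the paper records.
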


The objective function after rain is,
\begin{multline}\label{r}
\max_{\{W_i\in W\}} \widehat{\mathbf{Z}}_T'(\mathbf{W},\mathfrak{u})= \\
\max_{\{W_i\in W\}}\E\int_{\widetilde U}^{U-\varepsilon} \sum_{i=1}^{I} \sum_{m=1}^M 
\exp(-\rho_i m) \beta_i W_i(\mathfrak{u}) Z_{im}(\mathfrak{u},w) d\mathfrak{u}.
\end{multline} 

Consider a domain $\mathbb{D}$ in a $2$-sphere $\mathbb{S}$. First, we know that after the rain the pacers will have extra swing, measured by the difference of the average swing before rain to after rain say, $\varphi_1\in\mathbb{D}$. For $p$-partitions of $[\widetilde U,U-\varepsilon]$, define $\varphi_1=\sum_{i=1}^p \varphi_{1i} \a_i$, where $\varphi_i$ is the difference at $\mathfrak u_i$-th over and $\a_i$ is the orthonormal basis. Second, the measure the slowness of outfield is $\varphi_2=\sum_{i=1}^p \varphi_{2i} \tilde{\be}_i$ 
where $\varphi_{2i}$ is the difference in the speed of a ball after a batsman offers a shot and 
$\tilde{\be}_i$ is an orthonormal basis. 
As $\varphi_{1i}$ and $\varphi_{2i}$ vary in each $\mathfrak u_i$, we assume they are random variables with the Dirichlet inner products $(\varphi_{11},\varphi_{12})_{\nabla}:=(2\pi)^{-1} \int_{\mathbb D}\nabla \varphi_{11}(\mathfrak u).\nabla \varphi_{12}\ d\mathfrak u$  and $(\varphi_{21},\varphi_{22})_{\nabla}:=(2\pi)^{-1} \int_{\mathbb D}\nabla \varphi_{21}(\mathfrak u).\nabla \varphi_{22}\ d\mathfrak u$ such that $\varphi:=\varphi_1+\varphi_2$, where $\nabla$ is the gradient vector. Therefore, $\varphi$ is an instance of a centered Gaussian free field on a bounded simply connected domain $\mathbb{D}$ with zero boundary condition \citep{duplantier2011}. The two pairs $(\mathbb D,\varphi)$ and $(\widehat{\mathbb D},\hat\varphi)$ are equivalent in an $\sqrt{8/3}$-Liouville quantum gravity if there exists a conformal map $\varpi:\widehat{\mathbb D}\ra\mathbb D$ such that, $\hat{\varphi}=\varphi\ o\  \varpi+Q\log|\varpi'|$, with $Q=\sqrt{3/2}+\sqrt{2/3}$ and $\gamma=\sqrt{8/3}$ \citep{sheffield2016}. The main importance of $\sqrt{8/3}$-Liouville quantum gravity surface is that, its natural measure is a limit of regularized versions of $\exp\left\{\sqrt{8/3}\ \varphi(\mathfrak u)\right\}d\mathfrak u$ with $d\mathfrak{u}$ being a stochastic Henstock type of measure in $\mathbb{D}$ \citep{gwynne2016}. After including this part in the run dynamics with $\lambda_{\mathfrak{u}}$ being the constant multiplier of $\sqrt{8/3}$-Liouville quantum gravity Equation (\ref{run}) becomes,
\begin{multline}\label{run0}
d\mathbf{Z}(\mathfrak u,w)=\bm{\mu}[\mathfrak u,\mathbf{W}(\mathfrak u),\mathbf{Z}
(\mathfrak u,w)]d\mathfrak u\\
+\exp[\sqrt{8/3}\varphi(\mathfrak u)]d\mathfrak u+
\hat{\bm{\sigma}}
[\mathfrak u,\bm{\sigma}_2^*,\mathbf{W}(\mathfrak u),\mathbf{Z}(\mathfrak u,w)]
d\mathbf{B}(\mathfrak u),
\end{multline}
and the Liouville like action function on the run dynamics after the match starts after the rain is
\begin{multline}\label{run1}
\mathcal{L}_{\tilde{U},U-\varepsilon}(\mathbf{Z})=
\int_{\tilde{U}}^{U-\varepsilon}\E_{\mathfrak{u}} \left\{\sum_{i=1}^{I}\sum_{m=1}^M
\exp(-\rho_i m)\beta_iW_i(\mathfrak u)Z_{im}(\mathfrak u,w)d\mathfrak u \right.\\
+\lambda_{\mathfrak{u}}[\mathbf{W}(\mathfrak{u}+d\mathfrak{u})]-
\mathbf{W}(\mathfrak{u})
-\bm{\mu}[\mathfrak{u},\mathbf{W}(\mathfrak{u}),\mathbf{Z}(\mathfrak{u},w)]
d\mathfrak{u}\\
\left.\phantom{\int}
-\exp[\sqrt{8/3}\varphi(\mathfrak u)]d\mathfrak{u}-
\hat{\bm{\sigma}}[\mathfrak u,\bm{\sigma}_2^*,\mathbf{W}(\mathfrak u),
\mathbf{Z}(\mathfrak u,w)]d\mathbf{B}(\mathfrak u)]\right\}.
\end{multline}
The stochastic part of the Equation (\ref{run0}) becomes $\hat{\bm{\sigma}}$ as $\hat{\lambda}_1>\lambda_1$. Equation (\ref{run1}) follows Definition \ref{de7} such that $\mathbf a=\mathcal{L}_{\widetilde U,U-\varepsilon}(\mathbf Z)$ and it is integrable according to Corollary \ref{c0}.

\begin{prop}\label{p2}
If team $T$'s objective is to maximize Equation (\ref{r}) subject to the run dynamics in Equation (\ref{run0}) such that, Assumptions \ref{as0}-\ref{as2} hold with Lemma \ref{l2}, Proposition \ref{p1} and Corollary \ref{c0}, then after a rain stoppage under a continuous sample over the system of the match, the coefficient is 
\begin{multline*}
\be^*(\mathbf{Z})=
-\left[\sum_{i=1}^{I}\sum_{m=1}^M\ \exp(-\rho_i m)Z_{im}(\mathfrak u,w)\right]^{-1}\times\\
\left[\frac{\partial g^a[\mathfrak u,\mathbf{Z}(\mathfrak u,w)]}{\partial{\mathbf{Z}}}
\frac{\partial
	\{\bm{\mu}[\mathfrak u,\mathbf{W}(\mathfrak u),\mathbf{Z}(\mathfrak u,w)] +
	\exp[\varphi(\mathfrak u)\sqrt{8/3}]\}}{\partial \mathbf{W}}
\frac{\partial \mathbf{W}}{\partial W_i } \right.\\ 
\left.
+\mbox{$\frac{1}{2}$}\sum_{i=1}^I\sum_{j=1}^I
\frac{\partial \hat{\bm\sigma}^{ij}[\mathfrak u,\bm{\sigma}_2^*,\mathbf{W}(\mathfrak u),
	\mathbf{Z}(\mathfrak u,w)]}{\partial \mathbf{W}}
\frac{\partial \mathbf{W}}{\partial W_i }
\frac{\partial^2 g^a[\mathfrak u,\mathbf{Z}(\mathfrak u,w)]}{\partial Z_i\partial Z_j}
\right],
\end{multline*} 
where $\beta_i=\beta_j=\beta^*$ for all $i\neq j$, $\mathbf Z_{\widetilde U}$ is the initial run condition, function\\ $g^a\left[\mathfrak u,\mathbf{Z}(\mathfrak u,w)\right]\in C_0^2\left([\widetilde U,U-\varepsilon]\times \mathbb R^{2I\times \widehat U}\times \mathbb{R}^I\right)$ with $\mathbf{Y}(\mathfrak u)=g^a\left[\mathfrak u,\mathbf{Z}(\mathfrak u,w)\right]$ is a positive, non-decreasing penalization function vanishing at infinity which substitutes for the run dynamics such that, $\mathbf{Y}(\mathfrak u)$ is an It\^o process.
\end{prop}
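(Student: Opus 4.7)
The plan is to mirror the argument of Proposition \ref{p0}, replacing the ordinary over variable $u$ by the post-rain sample over $\mathfrak u$ and the ordinary Riemann integration by the stochastic It\^o--Henstock--Kurzweil--McShane--Feynman--Liouville integration built in Proposition \ref{p1} and Corollary \ref{c0}. Concretely, I would start from the objective in Equation (\ref{r}) constrained by the modified SDE in Equation (\ref{run0}), form the Lagrangian $\mathcal{L}_{\widetilde U,U-\varepsilon}(\mathbf{Z})$ exactly as in Equation (\ref{run1}) with non-negative multiplier $\lambda_{\mathfrak u}$, and then subdivide $[\widetilde U,U-\varepsilon]$ into $\gamma$-fine tagged partitions $\{(\mathfrak u_i,\hat I_i)\}$ per Definitions \ref{de5}--\ref{de7}. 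With the initial run condition $\mathbf{Z}_{\widetilde U}$ and a normalizing constant $N_{\mathfrak u}>0$, I would define the run transition function $\Psi_{\mathfrak u}^{\mathfrak u'}(\mathbf Z)$ analogously to Equation (\ref{3.0}) and use Proposition \ref{p1} to guarantee Stieltjes-type integrability of the exponentiated action $\mathfrak h$ over the cell $\mathbb R^{2I\times\widehat U\times I}$.

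Next, because $\mathbf Z$ remains an It\^o process under the new dynamics and $\mathbf W(\mathfrak u)$ is a Markov feedback, there exists a $C_0^2\bigl([\widetilde U,U-\varepsilon]\times\mathbb R^{2I\times\widehat U}\times\mathbb R^I\bigr)$ function $g^a[\mathfrak u,\mathbf Z(\mathfrak u,w)]$ with $\mathbf Y(\mathfrak u)=g^a[\mathfrak u,\mathbf Z(\mathfrak u,w)]$ satisfying the Lagrange-multiplier identity on an infinitesimal cell. Applying the generalized It\^o formula to $g^a$ under the augmented drift $\bm\mu+\exp[\sqrt{8/3}\,\varphi(\mathfrak u)]$ and diffusion $\hat{\bm\sigma}$, and invoking Fubini over $[\mathfrak u,\mathfrak u+\tilde\epsilon]$, reduces $\tilde\epsilon\,\mathcal L_{\mathfrak u,\mathfrak u+\tilde\epsilon}(\mathbf Z)$ to the sum of the discounted score integrand, $g^a$ itself, $g^a_{\mathfrak u}$, a drift term $g^a_{\mathbf Z}\bigl(\bm\mu+\exp[\sqrt{8/3}\varphi]\bigr)$, the quadratic-variation term $\tfrac12\sum_{ij}\hat{\bm\sigma}^{ij}\,g^a_{Z_iZ_j}$, a vanishing Brownian term by $\E_{\mathfrak u}[\Delta\mathbf B]=0$, and $o(\tilde\epsilon)$.

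Substituting this expansion into the transition function, writing $\mathbf Z(\mathfrak u,w)=\mathbf Z(\mathfrak u',w)+\xi$ with $|\xi|\le\eta\tilde\epsilon[\mathbf Z^T]^{-1}$, and performing the Gaussian integral in $\xi$ around a symmetric positive-definite Hessian $\mathbf\Theta$ with vector $\mathbf R$, the choice $N_{\mathfrak u}=\sqrt{(2\pi)^I/(\tilde\epsilon|\mathbf\Theta|)}$ produces the Wick-rotated Schr\"odinger type identity exactly as Equation (\ref{13}). Differentiating this identity with respect to $W_i$, using $\Psi_{\mathfrak u}^{\mathfrak u'}(\mathbf Z)\neq 0$ and the stability regime $\mathbf Z(\mathfrak u',w)\to\mathbf Z(\mathfrak u,w)$ as $\xi\to 0$, yields the first-order condition
\begin{multline*}
\sum_{i=1}^I\sum_{m=1}^M\exp(-\rho_i m)\beta_i Z_{im}(\mathfrak u,w) +g^a_{\mathbf Z}\frac{\partial\{\bm\mu+\exp[\sqrt{8/3}\varphi]\}}{\partial\mathbf W}\frac{\partial\mathbf W}{\partial W_i}\\
+\tfrac12\sum_{i=1}^I\sum_{j=1}^I g^a_{Z_iZ_j}\frac{\partial\hat{\bm\sigma}^{ij}}{\partial\mathbf W}\frac{\partial\mathbf W}{\partial W_i}=0,
\end{multline*}
and imposing $\beta_i=\beta_j=\beta^*$ for $i\neq j$ gives the claimed formula by algebraic inversion.

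The main obstacle will be justifying the passage from the Riemann-type small-interval expansion used in Proposition \ref{p0} to the Henstock--Kurzweil setting, since the gauge $\gamma=[\delta,\omega(\delta)]$ of Proposition \ref{p1} replaces uniform mesh control. I would handle this by appealing to the $\delta$-fine additivity of $\mathbf H$ established in Equation (\ref{Cauchy}) to guarantee that the normalized product measure in the path-integral factorization analogous to Equation (\ref{3.2}) remains well-defined on $\mathbb R^{2I\times\widehat U\times I}$. A secondary subtlety is that the Liouville term $\exp[\sqrt{8/3}\,\varphi(\mathfrak u)]$ enters only in the drift and, because $\varphi$ is a centered Gaussian free field independent of $\mathbf B$, contributes no extra quadratic-variation term; this must be stated explicitly so that the It\^o expansion above is not contaminated by spurious $\tfrac12$-terms, after which the remainder of the proof is bookkeeping identical in form to Proposition \ref{p0}.
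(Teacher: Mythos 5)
Your proposal follows essentially the same route as the paper's own proof: form the Liouville-augmented Lagrangian of Equation (\ref{run1}), expand via the generalized It\^o formula with the extra drift $\exp[\sqrt{8/3}\,\varphi(\mathfrak u)]$, perform the Gaussian integration over $\xi$ to obtain the Wick-rotated Schr\"odinger-type equation, differentiate in $W_i$, and invert under $\beta_i=\beta_j=\beta^*$. The only discrepancy is cosmetic: the paper takes the normalizing constant as $N_{\mathfrak u}=\sqrt{(2\pi)^{2I\times\widehat U\times I}/(\tilde\epsilon|\mathbf{\Theta}|)}$, matching the enlarged post-rain cell $\mathbb R^{2I\times\widehat U\times I}$, rather than your $(2\pi)^{I}$.
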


\begin{proof}
For a positive Lagrangian multiplier $\lambda_{\mathfrak u}$, with initial run condition $\mathbf Z_{\widetilde U}$ the run dynamics are expressed in Equation (\ref{run1}) such that such that Definition \ref{de7}, Proposition \ref{p1} and Corollary \ref{c0} hold. Subdivide $[\widetilde U,U-\varepsilon]$ into $n$ equally distanced small over-intervals $[\mathfrak u,\mathfrak u']$ such that $\tilde\epsilon\downarrow 0$, where $\mathfrak u'=\mathfrak u+\tilde\epsilon$.  For any positive $\tilde\epsilon$ and normalizing constant $N_{\mathfrak u}>0$, the run transition function is 
\[
\Psi_{\widetilde U,U-\tilde\varepsilon}(\mathbf{Z})=
\frac{1}{(N_{\mathfrak u})^n}
\int_{\mathbb{R}^{2I\times\widehat U\times I\times n}} \exp\bigg\{-\tilde\epsilon \sum_{k=1}^n\mathcal{L}_{\mathfrak u,\mathfrak u+\tilde\epsilon}^k(\mathbf{Z}) \bigg\}\Psi_0(\mathbf{Z}) \prod_{k=1}^n d\mathbf{Z}^k,
\]
with finite measure $\left(N_{\mathfrak u}\right)^{-n}\prod_{k=1}^{n}d\mathbf Z^k$ satisfying Corollary \ref{c0} with its initial run transition function after the rain stops as $\Psi_{\widetilde U}(\mathbf Z)>0$ for all $n\in\mathbb N$.  Define $\Delta \mathbf{W}(\nu)=\mathbf{W}(\nu+d\nu)-\mathbf{W}(\nu)$, then for $\tilde\epsilon\downarrow 0$ we have,
\begin{multline*}
\mathcal{L}_{\mathfrak u,\mathfrak u'}(\mathbf{Z})
=\E_{\mathfrak u} \int_{\widetilde U}^{U-\varepsilon} 
\left\{\sum_{i=1}^{I} \sum_{m=1}^M \exp(-\rho_i m) \beta_i W_i(\nu) Z_{im}(\nu,w)d\nu
\right.\\
+\lambda_\nu[\Delta \mathbf{W}(\nu)-
\{\bm{\mu}[\nu,\mathbf{W}(\nu),\mathbf{Z}(\nu,w)] +
\exp[\varphi(\nu)\sqrt{8/3}]\}d\nu\\
\left.\phantom{\int}
-\hat{\bm\sigma}[\nu,\bm{\sigma}_2^*,\mathbf{W}(\nu),\mathbf{Z}(\nu,w)]
d\mathbf{B}(\nu)]\right\}.
\end{multline*}
There exists a smooth function $g^a[\nu,\mathbf{Z}(\nu,w)]\in C^2\left([\widetilde U,U-\varepsilon]\times\mathbb R^{2I\times\widehat U}\times \mathbb{R}^I\right)$ such that $\mathbf{Y}(\nu)=g^a[\nu,\mathbf{Z}(\nu,w)]$ with $\mathbf{Y}(\nu)$ being an It\^o's process. Assume 
\begin{multline*}
g^a\left[\nu+\Delta\nu,\mathbf Z(\nu,w)+\Delta\mathbf Z(\nu,w)\right]=\\
\lambda_\nu[\Delta \mathbf{W}(\nu)-
\{\bm{\mu}[\nu,\mathbf{W}(\nu),\mathbf{Z}(\nu,w)] +
\exp[\varphi(\nu)\sqrt{8/3}]\}d\nu\\
-\hat{\bm\sigma}[\nu,\bm{\sigma}_2^*,\mathbf{W}(\nu),\mathbf{Z}(\nu,w)]
d\mathbf{B}(\nu)].
\end{multline*}
For a very small sample over-interval around $\mathfrak u$ with 
$\tilde\epsilon\downarrow 0$ generalized It\^o's Lemma gives,
\begin{multline*}
\mathcal{L}_{\mathfrak u,\mathfrak u'}(\mathbf{Z})=
\sum_{i=1}^{I}\sum_{m=1}^M\exp(-\rho_i m)\beta_iW_i(\mathfrak u)Z_{im}(\mathfrak u,w)\\
+g^a[\mathfrak{u},\mathbf{Z}(\mathfrak{u},w)]+ 
g_{\mathfrak{u}}^a[\mathfrak{u},\mathbf{Z}(\mathfrak{u},w)] \\
+g_{\mathbf{Z}}^a[\mathfrak u,\mathbf{Z}(\mathfrak u,w)] 
\{\bm{\mu}[\mathfrak u,\mathbf{W}(\mathfrak u),\mathbf{Z}(\mathfrak u,w)] 
+\exp[\varphi(\mathfrak u)\sqrt{8/3}]\}+\\ 
\mbox{$\frac{1}{2}$}\ \sum_{i=1}^I\sum_{j=1}^I\  \hat{\bm\sigma}^{ij}[\mathfrak u,\bm{\sigma}_2^*,\mathbf{W}(\mathfrak u),\mathbf{Z}(\mathfrak u,w)]\ g_{Z_iZ_j}^a[\mathfrak u,\mathbf{Z}(\mathfrak u,w)]+o(1),
\end{multline*}
where 
$\hat{\bm\sigma}^{ij}\left[\mathfrak u,\bm{\sigma}_2^*,\mathbf{W}(\mathfrak u),\mathbf{Z}(\mathfrak u,w)\right]$ represents $\{i,j\}^{th}$ component of the variance-covariance matrix, $g_{\mathfrak u}^a=\partial g^a/\partial\mathfrak u$, 
$g_{\mathbf{Z}}^a=\partial g^a/\partial \mathbf{Z}$, 
$g_{Z_iZ_j}^a=\partial^2 g^a/(\partial Z_i\partial Z_j)$, 
$\Delta B_i\Delta B_j=\delta^{ij}\tilde\epsilon$, 
$\Delta B_i\tilde\epsilon=\tilde\epsilon\Delta B_i=0$, and 
$\Delta Z_i(\mathfrak u)\Delta Z_j(\mathfrak u)=\tilde\epsilon$, 
where $\delta^{ij}$ is the Kronecker delta function. There exists a vector $\mathbf{\xi}_{(2I\times\widehat U\times I)\times 1}$  so that 
$
\mathbf{Z}(\mathfrak u,w)_{(2I\times\widehat U\times I)\times 1}=\mathbf{Z}(\mathfrak u',w)_{(2I\times\widehat U\times I)\times 1}+\xi_{(2I\times\widehat U\times I)\times 1}. 
$
Assuming $|\xi|\leq\eta\tilde\epsilon [\mathbf{Z}^T(\mathfrak u,w)]^{-1}$ and defining a $C^2$ function
\begin{eqnarray*}
f^a[\mathfrak u,\mathbf W(\mathfrak u),\xi]
 & = & \sum_{i=1}^{I}\sum_{m=1}^M\exp(-\rho_i m) \beta_i W_i(\mathfrak u) 
 [Z_{im}(\mathfrak u',w)+\xi] \\
& & +g^a[\mathfrak u,\mathbf{Z}(\mathfrak u',w)+\xi]+ 
g_{\mathfrak u}^a[\mathfrak u,\mathbf{Z}(\mathfrak u',w)+\xi] \\
 & & +g_{\mathbf{Z}}^a[\mathfrak u,\mathbf{Z}(\mathfrak u',w)+\xi]
\{\bm{\mu}[\mathfrak u,\mathbf{W}(\mathfrak u),\mathbf{Z}(\mathfrak u',w)+\xi] \\
& & +\exp[\varphi(\mathfrak u)\sqrt{8/3} ]\} \\ 
 & & +\mbox{$\frac{1}{2}$}\sum_{i=1}^I\sum_{j=1}^I
 \hat{\bm\sigma}^{ij}[\mathfrak u,\bm{\sigma}_2^*,\mathbf{W}(\mathfrak u),\mathbf{Z}
 (\mathfrak u',w)+\xi]\times \\
 & & g_{Z_iZ_j}^a[\mathfrak u,\mathbf{Z}(\mathfrak u',w)+\xi],
\end{eqnarray*} 
we get,
\begin{eqnarray*}
\Psi_{\mathbf{u}}(\mathbf{Z})+
\tilde\epsilon\frac{\partial \Psi_{\mathfrak u}(\mathbf{Z})}{\partial \mathfrak u} & = & 
\frac{\Psi_{\mathfrak u}(\mathbf{Z})}{N_{\mathfrak u}} 
\int_{\mathbb{R}^{2I\times \widehat U\times I}} 
\exp\{-\tilde\epsilon f^a[\mathfrak u,\mathbf{W}(\mathfrak u),\xi]\}d\xi+ \\
 & &
 \hspace{-1cm}
 \frac{1}{N_{\mathfrak u}}\frac{\partial \Psi_{\mathfrak u}(\mathbf{Z})}{\partial \mathbf{Z}}
\int_{\mathbb{R}^{2I\times\hat{U}\times I}} 
\xi\exp\{-\tilde\epsilon f^a[\mathfrak{u},\mathbf{W}(\mathfrak{u}),\xi]\}d\xi+ 
o(\tilde\epsilon^{1/2}).
\end{eqnarray*}
For $\tilde\epsilon\downarrow0$, $\Delta \mathbf{Z}\downarrow0$,
\begin{eqnarray*}
f^a[\mathfrak u,\mathbf{W}(\mathfrak u),\xi] & = & 
f^a[\mathfrak u,\mathbf{W}(\mathfrak u),\mathbf{Z}(\mathfrak u',w)]+\\
& & \sum_{i=1}^{I}f_{Z_i}^a[\mathfrak u,\mathbf{W}(\mathfrak u),\mathbf{Z}(\mathfrak u,w)][\xi_i-Z_i(\mathfrak u,w)]+\\
& & 
 \hspace{-2.5cm}
\mbox{$\frac{1}{2}$}\sum_{i=1}^{I}\sum_{j=1}^{I}
f_{Z_iZ_j}^a[\mathfrak u,\mathbf{W}(\mathfrak u),\mathbf{Z}(\mathfrak u',w)]
[\xi_i-Z_i(\mathfrak u',w)][\xi_j-Z_j(\mathfrak u',w)]+o(\tilde\epsilon).
\end{eqnarray*}
There exists a symmetric, positive definite and non-singular Hessian matrix $\mathbf{\Theta}_{[2I\times\widehat U\times I]\times [2I\times\widehat U\times I]}$ and a vector $\mathbf{R}_{(2I\times\widehat U\times I)\times 1}$ such that,
\begin{multline*}
\Psi_{\mathfrak u}(\mathbf{Z})+\tilde\epsilon 
\frac{\partial \Psi_{\mathfrak u}(\mathbf{Z})}{\partial \mathfrak{u}}=
\frac{1}{N_{\mathfrak u}} 
\sqrt{\frac{(2\pi)^{2I\times\widehat U\times I}}{\tilde\epsilon |\mathbf{\Theta}|}} \times\\
\exp\{-\tilde\epsilon f^a[\mathfrak u,\mathbf{W}(\mathfrak u),\mathbf{Z}(\mathfrak u',w)]+
\mbox{$\frac{1}{2}$}\tilde\epsilon\mathbf{R}^T\mathbf{\Theta}^{-1}\mathbf{R}\}\times\\
\left\{\Psi_{\mathfrak u}(\mathbf{Z})+
[\mathbf{Z}(\mathfrak u',w)+\mbox{$\frac{1}{2}$}(\mathbf{\Theta}^{-1} \mathbf{R})]
\frac{\partial \Psi_{\mathfrak{u}}(\mathbf{Z})}{\partial \mathbf{Z}}
\right\}+o(\tilde\epsilon^{1/2}).
\end{multline*}
Assuming 
$
N_u=\sqrt{(2\pi)^{2I\times\widehat U\times I}/\left(\epsilon |\mathbf{\Theta}|\right)}>0,
$
we get Wick rotated Schr\"odinger type equation as,
\begin{multline*}
\Psi_{\mathfrak u}(\mathbf{Z})+
\tilde\epsilon\frac{\partial \Psi_{\mathfrak u}(\mathbf{Z})}{\partial\mathfrak u} 
=\{1-\tilde\epsilon f^a[\mathfrak u,\mathbf{W}(u),\mathbf{Z}(\mathfrak u',w)]+
\mbox{$\frac{1}{2}$}\tilde\epsilon\mathbf{R}^T\mathbf{\Theta}^{-1}\mathbf{R}\}\times \\
\left[\Psi_{\mathfrak u}(\mathbf{Z})+
[\mathbf{Z}(\mathfrak u',w)+\mbox{$\frac{1}{2}$}(\mathbf{\Theta}^{-1} \mathbf{R})]
\frac{\partial \Psi_{\mathfrak{u}}(\mathbf{Z})}{\partial \mathbf{Z}}
\right]+o(\tilde\epsilon^{1/2}).
\end{multline*}
As $\mathbf{Z}(\tau,w)\leq\eta\tilde\epsilon|\xi^T|^{-1}$, there exists $|\mathbf{\Theta}^{-1}\mathbf{R}|\leq 2 \eta\tilde\epsilon|1-\xi^T|^{-1}$ such that for 
$\tilde\epsilon\downarrow 0$ we have $\big|\mathbf{Z}(\mathfrak u',w)+\mbox{$\frac{1}{2}$}\left(\mathbf{\Theta}^{-1}\ \mathbf{R}\right)\big|\leq\eta\tilde\epsilon$, $|\mathbf{\Theta}^{-1}\mathbf{R}|\leq 2 \eta\tilde\epsilon|1-\xi^T|^{-1}$ and,
\[
\frac{\partial }{\partial W_i}
f^a[\mathfrak u,\mathbf{W}(\mathfrak u),\mathbf{Z}(\mathfrak u',w)]=0.
\]
We know, $\mathbf{Z}(\mathfrak u',w)=\mathbf{Z}(\mathfrak u,w)-\xi$ and for $\xi\ra 0$ as we are looking for some stable solution. Hence, $\mathbf{Z}(\mathfrak u',w)$ can be replaced by $\mathbf{Z}(\mathfrak u,w)$ and,
\begin{multline*}
\sum_{i=1}^{I}\sum_{m=1}^M\exp(-\rho_i m)\beta_iZ_{im}(\mathfrak u,w)\\
+g_{\mathbf{Z}}^a[\mathfrak u,\mathbf{Z}(\mathfrak u,w)] 
\frac{\partial
\{\bm\mu[\mathfrak u,\mathbf{W}(\mathfrak u),\mathbf{Z}(\mathfrak u,w)]+
\exp[\varphi(\mathfrak u)\sqrt{8/3}]\} 
}{\partial \mathbf{W}}
\frac{\partial \mathbf{W}}{\partial W_i }\\ 
+\mbox{$\frac{1}{2}$} 
\sum_{i=1}^I\sum_{j=1}^I 
\frac{\partial
\hat{\bm\sigma}^{ij}[u,\bm{\sigma}_2^*,\mathbf{W}(\mathfrak u),\mathbf{Z}(\mathfrak u,w)] }
{\partial \mathbf{W}}
\frac{\partial \mathbf{W}}{\partial W_i }
g_{Z_iZ_j}^a[\mathfrak u,\mathbf{Z}(\mathfrak u,w)]=0.
\end{multline*}
If $\beta_i=\beta_j=\beta^*$ for all $i\neq j$ then,
\begin{eqnarray*}
\be^*(\mathbf{Z}) & = & 
-\left[\sum_{i=1}^{I}\sum_{m=1}^M\exp(-\rho_i m)Z_{im}(\mathfrak u,w)\right]^{-1}\times 
\notag \\
 & & \left[\frac{\partial g^a[\mathfrak u,\mathbf{Z}(\mathfrak u,w)]}{\partial{\mathbf{Z}}}  
 \frac{\partial
 \{\bm{\mu}[\mathfrak u,\mathbf{W}(\mathfrak{u}),\mathbf{Z}(\mathfrak u,w)] +
 \exp[\varphi(\mathfrak u)\sqrt{8/3}]\} }{\partial \mathbf{W}}
 \frac{\partial \mathbf{W}}{\partial W_i } \right. \notag \\ 
 & & \left.
 +\mbox{$\frac{1}{2}$}
 \sum_{i=1}^I\sum_{j=1}^I
 \frac{\partial \hat{\bm\sigma}^{ij}[\mathfrak u,\bm{\sigma}_2^*,\mathbf{W}(\mathfrak u),
 \mathbf{Z}(\mathfrak u,w)]}{\partial \mathbf{W}}
\frac{\partial \mathbf{W}}{\partial W_i }
\frac{\partial^2 g^a[\mathfrak u,\mathbf{Z}(\mathfrak u,w)]}{\partial Z_i\partial Z_j} \right].
\end{eqnarray*}
\end{proof}	

\section{Discussion}
In this paper we use a Feynman path integral technique to determine the optimality coefficient $\be^*(\mathbf Z)$ for a one-day match with rain interruption and no interruptions. This coefficient tells us how to select a player to bat at a certain position based on the condition of the match. We consider different types of environmental conditions such as the speed of the wind, the moisture on the field, the speed of the ball on the outfield, extra swing from the bowler and hard time to grip a ball during a delivery for a spinner.  In the second part we focus on more  on volatile environment after the rain stops. We assume after the stoppage that the occurrence of each over strictly depends on the amount of rain at that sample over. Using It\^ o's lemma we define a $\delta_{\mathfrak u}$-gauge which generates a sample over $\mathfrak u$ instead of an actual over $u$ is assumed to follow a Wiener process. Furthermore, we assume the strategy space of a bowler from the opposition team has a $\sqrt{8/3}$-Liouville like quantum gravity surface and, we construct a stochastic It\^o-Henstock-Kurzweil-McShane-Feynman-Liouville type path integral to solve for the optimality coefficient. 
\bibliography{bib}

\begin{thebibliography}{21}

\bibitem[\protect\citeauthoryear{Chow}{1996}]{chow1996}
\begin{barticle}[author]
\bauthor{\bsnm{Chow},~\bfnm{Gregory~C}\binits{G.~C.}}
(\byear{1996}).
\btitle{The Lagrange method of optimization with applications to portfolio and
  investment decisions}.
\bjournal{Journal of Economic Dynamics and Control}
\bvolume{20}
\bpages{1--18}.
\end{barticle}
\endbibitem

\bibitem[\protect\citeauthoryear{Clarke}{1988}]{clarke1988}
\begin{barticle}[author]
\bauthor{\bsnm{Clarke},~\bfnm{Stephen~R}\binits{S.~R.}}
(\byear{1988}).
\btitle{Dynamic programming in one-day cricket-optimal scoring rates}.
\bjournal{Journal of the Operational Research Society}
\bvolume{39}
\bpages{331--337}.
\end{barticle}
\endbibitem

\bibitem[\protect\citeauthoryear{de~Wit and Smith}{2012}]{de2012}
\begin{bbook}[author]
\bauthor{\bparticle{de} \bsnm{Wit},~\bfnm{Bernhard}\binits{B.}} \AND
  \bauthor{\bsnm{Smith},~\bfnm{Jack}\binits{J.}}
(\byear{2012}).
\btitle{Field theory in particle physics}
\bvolume{1}.
\bpublisher{Elsevier}.
\end{bbook}
\endbibitem

\bibitem[\protect\citeauthoryear{Duckworth and Lewis}{1998}]{duckworth1998}
\begin{barticle}[author]
\bauthor{\bsnm{Duckworth},~\bfnm{Frank~C}\binits{F.~C.}} \AND
  \bauthor{\bsnm{Lewis},~\bfnm{Anthony~J}\binits{A.~J.}}
(\byear{1998}).
\btitle{A fair method for resetting the target in interrupted one-day cricket
  matches}.
\bjournal{Journal of the Operational Research Society}
\bvolume{49}
\bpages{220--227}.
\end{barticle}
\endbibitem

\bibitem[\protect\citeauthoryear{Duplantier and
  Sheffield}{2011}]{duplantier2011}
\begin{barticle}[author]
\bauthor{\bsnm{Duplantier},~\bfnm{Bertrand}\binits{B.}} \AND
  \bauthor{\bsnm{Sheffield},~\bfnm{Scott}\binits{S.}}
(\byear{2011}).
\btitle{Liouville quantum gravity and KPZ}.
\bjournal{Inventiones mathematicae}
\bvolume{185}
\bpages{333--393}.
\end{barticle}
\endbibitem

\bibitem[\protect\citeauthoryear{Falconer}{2004}]{falconer2004}
\begin{bbook}[author]
\bauthor{\bsnm{Falconer},~\bfnm{Kenneth}\binits{K.}}
(\byear{2004}).
\btitle{Fractal geometry: mathematical foundations and applications}.
\bpublisher{John Wiley \& Sons}.
\end{bbook}
\endbibitem

\bibitem[\protect\citeauthoryear{Feynman}{1949}]{feynman1949}
\begin{barticle}[author]
\bauthor{\bsnm{Feynman},~\bfnm{Richard~Phillips}\binits{R.~P.}}
(\byear{1949}).
\btitle{Space-time approach to quantum electrodynamics}.
\bjournal{Physical Review}
\bvolume{76}
\bpages{769}.
\end{barticle}
\endbibitem

\bibitem[\protect\citeauthoryear{Fujiwara}{2017}]{fujiwara2017}
\begin{bbook}[author]
\bauthor{\bsnm{Fujiwara},~\bfnm{Daisuke}\binits{D.}}
(\byear{2017}).
\btitle{Rigorous time slicing approach to Feynman path integrals}.
\bpublisher{Springer}.
\end{bbook}
\endbibitem

\bibitem[\protect\citeauthoryear{Govindan}{2016}]{govindan2016}
\begin{bbook}[author]
\bauthor{\bsnm{Govindan},~\bfnm{TE}\binits{T.}}
(\byear{2016}).
\btitle{Yosida approximations of stochastic differential equations in infinite
  dimensions and applications}
\bvolume{79}.
\bpublisher{Springer}.
\end{bbook}
\endbibitem

\bibitem[\protect\citeauthoryear{Gwynne and Miller}{2016}]{gwynne2016}
\begin{barticle}[author]
\bauthor{\bsnm{Gwynne},~\bfnm{Ewain}\binits{E.}} \AND
  \bauthor{\bsnm{Miller},~\bfnm{Jason}\binits{J.}}
(\byear{2016}).
\btitle{Metric gluing of Brownian and $\sqrt {8/3}$-Liouville quantum gravity
  surfaces}.
\bjournal{arXiv preprint arXiv:1608.00955}.
\end{barticle}
\endbibitem

\bibitem[\protect\citeauthoryear{Johnston et~al.}{1993}]{johnston1993}
\begin{barticle}[author]
\bauthor{\bsnm{Johnston},~\bfnm{Mark~I}\binits{M.~I.}},
  \bauthor{\bsnm{Clarke},~\bfnm{Stephen~R}\binits{S.~R.}},
  \bauthor{\bsnm{Noble},~\bfnm{David~H}\binits{D.~H.}} \betal{et~al.}
(\byear{1993}).
\btitle{Assessing player performance in one-day cricket using dynamic
  programming}.
\end{barticle}
\endbibitem

\bibitem[\protect\citeauthoryear{Kappen}{2007}]{kappen2007}
\begin{binproceedings}[author]
\bauthor{\bsnm{Kappen},~\bfnm{Hilbert~J}\binits{H.~J.}}
(\byear{2007}).
\btitle{An introduction to stochastic control theory, path integrals and
  reinforcement learning}.
In \bbooktitle{AIP conference proceedings}
\bvolume{887}
\bpages{149--181}.
\bpublisher{AIP}.
\end{binproceedings}
\endbibitem

\bibitem[\protect\citeauthoryear{Koch et~al.}{1906}]{koch1906}
\begin{barticle}[author]
\bauthor{\bsnm{Koch},~\bfnm{Helge}\binits{H.}} \betal{et~al.}
(\byear{1906}).
\btitle{Une m{\'e}thode g{\'e}om{\'e}trique {\'e}l{\'e}mentaire pour
  l’{\'e}tude de certaines questions de la th{\'e}orie des courbes planes}.
\bjournal{Acta mathematica}
\bvolume{30}
\bpages{145--174}.
\end{barticle}
\endbibitem

\bibitem[\protect\citeauthoryear{Kurtz and Swartz}{2004}]{kurtz2004}
\begin{bbook}[author]
\bauthor{\bsnm{Kurtz},~\bfnm{Douglas~S}\binits{D.~S.}} \AND
  \bauthor{\bsnm{Swartz},~\bfnm{Charles~W}\binits{C.~W.}}
(\byear{2004}).
\btitle{Theories of integration: the integrals of Riemann, Lebesgue,
  Henstock-Kurzweil, and Mcshane}
\bvolume{9}.
\bpublisher{World Scientific Publishing Company}.
\end{bbook}
\endbibitem

\bibitem[\protect\citeauthoryear{Muldowney}{2012}]{muldowney2012}
\begin{bbook}[author]
\bauthor{\bsnm{Muldowney},~\bfnm{Pat}\binits{P.}}
(\byear{2012}).
\btitle{A modern theory of random variation}.
\bpublisher{Wiley Online Library}.
\end{bbook}
\endbibitem

\bibitem[\protect\citeauthoryear{Najafi}{2015}]{najafi2015}
\begin{barticle}[author]
\bauthor{\bsnm{Najafi},~\bfnm{MN}\binits{M.}}
(\byear{2015}).
\btitle{Fokker-Planck equation of Schramm-Loewner evolution}.
\bjournal{Physical Review E}
\bvolume{92}
\bpages{022113}.
\end{barticle}
\endbibitem

\bibitem[\protect\citeauthoryear{{\O}ksendal}{2003}]{oksendal2003}
\begin{bincollection}[author]
\bauthor{\bsnm{{\O}ksendal},~\bfnm{Bernt}\binits{B.}}
(\byear{2003}).
\btitle{Stochastic differential equations}.
In \bbooktitle{Stochastic differential equations}
\bpages{65--84}.
\bpublisher{Springer}.
\end{bincollection}
\endbibitem

\bibitem[\protect\citeauthoryear{Ross}{2008}]{ross2008}
\begin{barticle}[author]
\bauthor{\bsnm{Ross},~\bfnm{Kevin}\binits{K.}}
(\byear{2008}).
\btitle{Stochastic control in continuous time}.
\bjournal{Lecture Notes on Continuous Time Stochastic Control, Spring}.
\end{barticle}
\endbibitem

\bibitem[\protect\citeauthoryear{Schramm}{2011}]{schramm2011}
\begin{bincollection}[author]
\bauthor{\bsnm{Schramm},~\bfnm{Oded}\binits{O.}}
(\byear{2011}).
\btitle{Scaling limits of loop-erased random walks and uniform spanning trees}.
In \bbooktitle{Selected Works of Oded Schramm}
\bpages{791--858}.
\bpublisher{Springer}.
\end{bincollection}
\endbibitem

\bibitem[\protect\citeauthoryear{Sheffield et~al.}{2016}]{sheffield2016}
\begin{barticle}[author]
\bauthor{\bsnm{Sheffield},~\bfnm{Scott}\binits{S.}} \betal{et~al.}
(\byear{2016}).
\btitle{Conformal weldings of random surfaces: SLE and the quantum gravity
  zipper}.
\bjournal{The Annals of Probability}
\bvolume{44}
\bpages{3474--3545}.
\end{barticle}
\endbibitem

\bibitem[\protect\citeauthoryear{Yeung and Petrosjan}{2006}]{yeung2006}
\begin{bbook}[author]
\bauthor{\bsnm{Yeung},~\bfnm{David~WK}\binits{D.~W.}} \AND
  \bauthor{\bsnm{Petrosjan},~\bfnm{Leon~A}\binits{L.~A.}}
(\byear{2006}).
\btitle{Cooperative stochastic differential games}.
\bpublisher{Springer Science \& Business Media}.
\end{bbook}
\endbibitem

\end{thebibliography}
\end{document}